\def\@email#1#2{%
 \endgroup
 \patchcmd{\titleblock@produce}
  {\frontmatter@RRAPformat}
  {\frontmatter@RRAPformat{\produce@RRAP{*#1\href{mailto:#2}{#2}}}\frontmatter@RRAPformat}
  {}{}
}%
\newtheorem{theorem}{Theorem}
\newtheorem{lemma}[theorem]{Lemma}
\newtheorem{proposition}[theorem]{Proposition}
\newtheorem{corollary}[theorem]{Corollary}
\newtheorem{remark}[theorem]{Remark}
\DeclareMathOperator{\law}{law}
\DeclareMathOperator\Var{\text{\rm Var}}
\DeclareMathOperator\Cov{\text{\rm Cov}}
\DeclareMathOperator\cov{\text{\rm Cov}}
\DeclareMathOperator{\rig}{\text{\rm right}}
\DeclareMathOperator{\lef}{\text{\rm left}}
\DeclareMathOperator{\iso}{\text{\rm iso}}
\DeclareMathOperator{\palm}{\text{\rm Palm}}
\newcommand\cB{{\mathcal B}}
\newcommand\cM{{\mathcal M}}
\newcommand\cX{{\mathcal X}}
\newcommand\cY{{\mathcal Y}}
\newcommand\RR{{\mathbb R}}
\newcommand\EE{{\mathbb E}}
\newcommand\uno{{\mathsf 1}} 
\newcommand\one{{\mathsf 1}}
\newcommand\ve{\varepsilon}
\newcommand\ved{{\varepsilon^2}}
\newcommand\vet{{\varepsilon^{-2}}}
\newcommand\veo{{\varepsilon^{-1}}}
\newcommand \etab{B }
\newcommand\hQ{\widehat Q}
\newcommand\hT{\widehat T}
\newcommand\hU{\widehat U}
\newcommand\hS{\widehat S}
\newcommand{\heta}{{\widehat\etab}}
\newcommand{\hB}{{\widehat B}}
\newcommand{\hmu}{{\widehat\mu}}
\newcommand{\tmu}{{\widetilde\mu}}
\newcommand{\hd}{{\widehat \dd}}
\newcommand{\teta}{{\widetilde\etab}}
\newcommand{\tsigma}{{\tilde\sigma}}
\newcommand{\trho}{{\tilde\rho}}
\newcommand{\ta}{{\tilde a}}
\newcommand{\tb}{{\tilde b}}
\newcommand{\tg}{{\tilde g}}
\newcommand{\tr}{{\tilde r}}
\newcommand{\tit}{{\tilde t}}
\newcommand{\tv}{{\tilde v}}
\newcommand{\tx}{{\tilde x}}
\newcommand{\ty}{{\tilde y}}
\renewcommand{\ge}{\geqslant}
\renewcommand{\le}{\leqslant}
\newcommand{\dd}{\mathrm{d}}
\newcommand{\ieta}{{\etab^{\text{\rm iso}}}}
\newcommand{\eqlaw}{\;\overset{\law}{=}\;}
\newcommand{\toas}{\;\overunderset{\text{\rm a.s.}}{\ve\to0}{\longrightarrow}\;}
\newcommand{\tolaw}{\;\overunderset{\law}{\ve\to0}{\longrightarrow}\;}
\newcommand{\toe}{\;\underset{\ve\to0}{\longrightarrow}\;}
\newcommand{\lime}{\underset{\ve\to0}{\lim}}
\newcommand{\sint}{\textstyle{\int}}
  \newcommand{\bab}{{ab}}
\newcommand{\oab}{\overline{ab}}
\newcommand{\oob}{\overline{ob}}
\begin{document}


\title{Multitime fields and hard rod scaling limits}
\author{Pablo A. Ferrari}
\affiliation{Universidad de Buenos Aires \emph{and}  IMAS-UBA-CONICET, Argentina.}  \email{pferrari@dm.uba.ar}
\author{Stefano Olla}%
\affiliation{CEREMADE,
   Universit\'e Paris Dauphine - PSL Research University
\emph{and}  Institut Universitaire de France
\emph{and} GSSI, L'Aquila 
}

\date{November 5, 2025}

\begin{abstract}

A Poisson line process is a random set of straight lines contained in the plane, as the image of the map $(x,v)\mapsto (x+vt)_{t\in\mathbb{R}}$, for each point $(x,v)$ of a Poisson process in the space-velocity plane. By associating a step with each line of the process, a random surface called multitime walk field is obtained. The diffusive rescaling of the surface converges to the multitime Brownian motion, a classical Gaussian field also called Lévy-Chentsov field. A cut of the multitime fields with a perpendicular plane, reveals a one dimensional continuous time random walk and a Brownian motion, respectively.

A hard rod is an interval contained in  $\mathbb{R}$ that travels ballistically until it collides with another hard rod, at which point they interchange positions. By associating each line with the ballistic displacement of a hard rod and associating surface steps with hard rod jumps, we obtain the hydrodynamic limits of the hard rods in the Euler and diffusive scalings. The main tools are law of large numbers and central limit theorems for Poisson processes.

When rod sizes are zero we have an ideal gas dynamics. We describe the relation between ideal gas and hard-rod invariant measures.

\end{abstract}

\keywords{ {Multi time random fields}, {Lévy Chentsov fields},
{Hard Rods dynamics}, {completely integrable systems},
  {generalized hydrodynamic limits}, {diffusive fluctuations}}

\maketitle

\section{Introduction}      
To compute the length of a planar curve, Crofton \cite{zbMATH02722727} proposed in 1868 a method involving randomly drawing straight lines within a plane. Let $\ell(\theta,p)$ be the line that has a distance of $p$ from the origin, and whose perpendicular, drawn from the origin of $\RR^2$, makes an angle $\theta$ with the $x$-axis. The measure on the set of lines contained in $\RR^2$ induced by $\ell$ from the Lebesgue measure on the strip $\RR_+\times [0,2\pi)$, is invariant under isometries, see Santaló \cite{MR2162874}. 

In 1945 Lévy \cite{zbMATH02507070,zbMATH03105509,levy-1948} proposes a random surface $\etab:\RR^2\to\RR$ called Brownian motion with several (time) parameters,  a centered Gaussian process with covariances $\Cov(\etab(a),\etab(b)) = \frac12(|a|+|b|-|a-b|)$, where $|\cdot|$ is the Euclidean norm. The one-dimensional trajectory obtained by  cutting the surface with a vertical plane is one dimensional Brownian motion, explaining the nomenclature. 

White noise in $\RR^d$ with control measure $\mu$ is a centered Gaussian process $\omega$ indexed by Borel sets, with covariances $\Cov(\omega(A),\omega(B))= \mu(A\cap B)$. Considering $\mu$ as the Lebesgue measure on the $\theta$-$p$ strip,  Chentsov \cite{chentsov1957levy} describes the Lévy's multitime Brownian motion as the surface $\etab(b) := \omega(ob)$, where $ob$ is the set of points $(\theta,p)$ mapped to lines crossing the segment $\oob$; $o$ is the origin of $\RR^2$. 

In 1975 Maldenbrot \cite{MR388559,zbMATH03516826} considered an isometry invariant marked Poisson line process, and introduced the multitime Poisson field $M(b):= \sum_{\ell\in ob} r(\ell)$, where the sum runs on the set of Poisson lines $\ell$ crossing the segment $\oob$, and $r(\ell)$ is the associated mark; the marks are iid centered random variables with finite variance. He proved that the rescaled function $M$ converges to the multitime Brownian field.   
Ossiander and Pyke \cite{zbMATH03938123} interpret the function $M$ as a set indexed process and shows its convergence to the multitime Brownian motion. Lantuéjoul \cite{Lantuejoul1993,lantuejoul-book} uses $M$ to model and simulate geological phenomena.

Building on Chentsov approach, Ferrari, Franceschini, Grevino and Spohn \cite{ffgs22} introduced a slightly different surface $H_N$, as a function of the empirical measure $N$ of a non necessarily space shift-invariant Poisson line process, and applied it to show hydrodynamic results on the hard rod dynamics. In this paper we review the approach and application, and perform the diffusive rescaling of the fluctuations fields in the non-homogeneous case,  extending results by the authors \cite{fo2024}. 

To better match the relation with hard rods, we map  points in the space-velocity $\RR^2$ to lines contained in the space-time $\RR^2$, by defining $\ell(x,v)=\{(x+vt,t):t\in\RR\}$, the line intersecting the $x$-axis at the point $x$ and having inclination $\alpha(v):=\arctan(1/v)$, the angle whose cotangent is $v$. To incorporate marks, we add a dimension to our space. A point $(x,v,r)$ in the space-velocity-mark  $\RR^3$  represents the line $\ell(x,v)$ with mark~$r$. For $a,b$ in the space-time plane $\RR^2$, denote by $ab$ the set of points $(x,v,r)$ such that the line $\ell(x,v)$ intersects the segment $\oab$. 
Orient the lines in the positive direction of time. Each oriented line $\ell(x,v)$ divides the space-time plane into half-planes denoted $\lef(x,v)$ and $\rig(x,v)$; by convention  the right half plane contains the line. Each line crossing the segment~$\oab$ belongs to one of the sets 
\begin{align}
   ab_+&:= \bigl\{(x,v,r) \in\RR^3: a\in \lef(x,v),\,b\in\rig(x,v)\bigr\}, \label{ab+}\\
   ab_-&:= \bigl\{(x,v,r) \in\RR^3: a\in \rig(x,v),\,b\in\lef(x,v)\bigr\}, \label{ab-}
\end{align}
see Figure \ref{LCS14}.

{\centering
  \includegraphics[width=.8\textwidth]{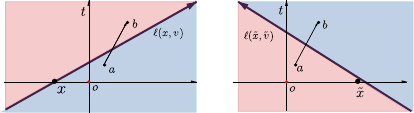}
  \captionof{figure}{The line $\ell(x,v)$ in the left figure belongs to $ab-$ while the line $\ell(\tx,\tv)$ in the right figure belongs to $ab+$. \label{LCS14}}
  \par}

The marked line associated to the point $(x,v,r)$ induces a surface $h_{x,v,r}:\RR^2\to\RR$ which is constant in each half plane determined by $\ell(x,v)$. The height difference between the right and left half planes is $r$, and the height of the half plane containing the origin $o$ is zero. More precisely,
\begin{align}
   \label{h1}
  h_{x,v,r}(b)&:= r\bigl(\one\{(x,v)\in ob_+\}- \one\{(x,v)\in ob_-\}\bigr).
\end{align}
In the left picture of Figure \ref{LCS14} the right half plane of the line $\ell(x,v)$ associated to the point $(x,v,r)$ is at height $0$ and the left one is at height $r$. In the right picture the left plane of $(\tx,\tv,\tr)$ is at height $0$ and the other at heigth $-\tr$.

Let $N:= \sum_{(x,v,r)\in X}\delta_{(x,v,r)}$ be the empirical measure of a finite point configuration $X\subset \RR^3$, and 
define  the function $H_{N}:\RR^2\to\RR$ by
\begin{align}
  H_{N}(b) &:= \sum_{(x,v,r)\in X} h_{x,v,r}(b)= N_1(ob_+)-N_1(ob_-), \label{Hn1}\\
  N_1&:= \sum_{(x,v,r)\in X}r\,\delta_{(x,v,r)}.\label{Hn2}
\end{align}
Define the space of measures
\begin{align}
  \label{cM1}
  \cM&:=\bigl\{\mu\text{ on } \cB(\RR^3): \,\mu(ab)+\mu_2(ab)<\infty,\,\text{ for all }a,b\in\RR^2\bigr\},\\
    d\mu_k(x,v,r)&:= r^k\,d\mu(x,v,r). \label{mk3}
\end{align}
We extend the definition \eqref{Hn1} to every $\mu\in\cM$, by 
 \begin{align}
   \label{Hm1}
   H_\mu(b) := \iiint h_{x,v,r}(b)\, d\mu(x,v,r) =  \mu_1(ob_+)-\mu_1(ob_-).
 \end{align}
Let $\mu\in\cM$ and consider a Poisson process $X$ with intensity measure $\mu$, and empirical measure $N=N[X]$. Denote by $P$ and $E$ the probability and expectation associated to $X$. We have $N\in\cM$ almost surely, and 
 $E H_N=H_\mu$. We refer to $H_N$ as a multi time random walk field, as its one dimensional marginals consist of non homogeneous continuous time random walks, see \eqref{pf1}-\eqref{pf2}.

Fix a rescaling parameter $\ve>0$, later tending to $0$, and consider a Poisson process $X^\ve$ with intensity measure $\ve^{-1}\mu$. The rescaled empirical measure and random walk fields are given by
 \begin{gather}
   \label{ne1}
   N^\ve\varphi := \ve \sum_{(x,v,r)\in X^\ve} \varphi(x,v,r), \qquad N^\ve(A) := N^\ve\uno_A.
    \end{gather}
Notice that $\EE N^\ve_1(ob_\pm)=\mu_1(ob_\pm)$ for all $\ve$, which implies  $E H_{N^\ve}(b) = H_\mu(b)$. 

Define the empirical fluctuation fields 
 \begin{align}
   \label{ee2}
   \etab^{\ve}(b) &:=  \ve^{-1} \bigl(H_{N^{\ve^2}}(b)-H_\mu(b)\bigr),\\
   \heta^{\ve}(b) &:= \ve^{-\frac12} \bigl(H_{N^{\ve^2}}(\ve b)-H_\mu(\ve b)\bigr),
 \end{align}
The processes  $\etab^{\ve}$ and $\heta^{\ve}$ are functions of the same $X^{\ve^2}$. The macroscopic scale is order $1$. The scaling of $\etab^\ve$ is diffusive because the marks associated to lines in $ab_\pm$ have mean $\ve^{-2}\mu_1(ab_\pm)$, and these marks are multiplied by $\ve$, so that the variance is $\mu_2(ab_\pm)$. The same argument applies to $\heta^\ve$, as the masses crossing the segment with extremes $o$ and $\ve b$ have mean $\ve^{-1}\mu_1(ob_\pm)$, so that the variance is order $\ve$. Denote by $d\mu(v,r|z)$ the conditioned law of $\mu$ on the velocity-mark plane $\RR^2$, given that the space coordinate is $z$.
\begin{theorem}[Scaling limits for multitime processes]
   \label{sl1}
Assume $\mu\in\cM$, then,
\begin{align}
     H_{N^\ve}(b) &\toas H_\mu(b), \label{t1a}\\
     (\etab^{\ve}(a),\heta^{\ve}(b))&\tolaw (\etab(a),\heta(b)), \label{t1b}
\end{align}
where $\etab$ and $\heta$ are the Lévy-Chentsov fields associated to the distance $\dd(a,b)= \mu_2(ab)$ and $\hd(a,b)=\hmu_2(ab)$, respectively, where $d\hmu_2(x,v,r) = dx\,r^2 d\mu(v,r|0)$. Furthermore, the processes $\etab$ and $\heta$ are independent. 
\end{theorem}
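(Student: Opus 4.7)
The plan is to exploit the Poisson structure of $X^{\ve^2}$: both $\etab^\ve$ and $\heta^\ve$ are linear functionals of this single point process, so Campbell's formula delivers all moments and the Lévy-Khintchine representation gives the characteristic function explicitly, with the CLT following by Taylor expansion in $\ve$. All three assertions then fall out of this one framework.

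For the almost-sure limit \eqref{t1a}, write
\[
H_{N^\ve}(b) = \ve \ssum_{(x,v,r) \in X^\ve} r\, (\one_{ob_+} - \one_{ob_-}),
\]
a Poisson integral against intensity $\ve^{-1}\mu$. Campbell yields $\EE H_{N^\ve}(b) = H_\mu(b)$ and $\Var H_{N^\ve}(b) = \ve\,\mu_2(ob) \to 0$, so $L^2$ convergence is immediate. A Borel-Cantelli argument along $\ve_n = 1/n$, coupling the $X^{\ve_n}$'s as marked points of a single master Poisson process on $\RR^3 \times (0,1)$, upgrades this to the stated almost-sure convergence.

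For the fluctuations, Campbell gives
\[
\Cov(\etab^\ve(a), \etab^\ve(b)) = \mu_2(oa_+ \cap ob_+) + \mu_2(oa_- \cap ob_-) - \mu_2(oa_+ \cap ob_-) - \mu_2(oa_- \cap ob_+).
\]
The mixed terms vanish by the half-plane convention (they would force $o$ simultaneously strictly left and weakly right of $\ell(x,v)$), and the remaining sum equals $\mu_2(oa \cap ob \setminus ab)$. The observation that any straight line crossing the triangle $oab$ enters through one side and exits through exactly one other yields
\[
\mu_2(oa) + \mu_2(ob) - \mu_2(ab) = 2\, \mu_2(oa \cap ob \setminus ab),
\]
identifying $\Cov(\etab^\ve(a), \etab^\ve(b)) = \tfrac12[\dd(o,a) + \dd(o,b) - \dd(a,b)]$, precisely the Lévy-Chentsov covariance. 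Joint Gaussianity of the finite-dimensional marginals then follows from the Lévy-Khintchine expansion
\[
\log \EE \exp\Bigl(i \ssum_k \xi_k\, \etab^\ve(a_k)\Bigr) = \ve^{-2} \int \bigl(e^{i\ve F} - 1 - i\ve F\bigr)\, d\mu \ \longrightarrow \ -\tfrac12 \int F^2\, d\mu,
\]
with $F = \sum_k \xi_k\, r\, (\one_{oa_{k+}} - \one_{oa_{k-}})$. The $\heta^\ve$ piece runs analogously, with $oa_{k\pm}$ replaced by the shrinking sets $o\,\ve b_{k\pm}$; a change of variables $x \mapsto x/\ve$ localizes $\mu$ to its disintegration at $x = 0$, replacing $\mu_2$ by $\hmu_2$ in the limit, and the triangle identity applied to $\hmu_2$ gives the Lévy-Chentsov covariance of $\heta$ with distance $\hd$.

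For the joint statement, any linear combination $\sum_i c_i\, \etab^\ve(a_i) + \sum_j c'_j\, \heta^\ve(b_j)$ is again a single compensated Poisson integral, so the same Lévy-Khintchine expansion delivers joint Gaussianity of the limit. The cross-covariance $\Cov(\etab^\ve(a), \heta^\ve(b))$ is bounded by a constant times $\mu_2(oa \cap o\,\ve b) \leq \mu_2(o\,\ve b) \to 0$, so it vanishes, and independence of the Gaussian limits follows. The main technical obstacle is the local limit underpinning the $\heta^\ve$ CLT: identifying the effective measure on shrinking segments near the origin with $\hmu_2$ requires a continuity hypothesis on the disintegration $\mu(v, r \mid x)$ at $x = 0$, implicit in the notation $d\mu(v, r \mid 0)$ and to be checked compatible with $\mu \in \mc M$; once this is in hand, everything else is routine Campbell/Lévy-Khintchine bookkeeping.
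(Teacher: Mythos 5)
Your proposal is correct in substance and reaches all three claims, but by a route that differs technically from the paper's. The paper never invokes the Poisson exponential formula: for both the law of large numbers (Lemma~\ref{lnn}) and the white-noise convergence \eqref{lf2} it writes $X^\ve$ as a superposition of $1/\ve$ iid copies of a Poisson process with intensity $\mu$, as in \eqref{ii1}, so that $H_{N^\ve}(b)$ becomes a normalized sum of iid random variables and the classical Chebyshev--Borel--Cantelli argument and the classical CLT apply directly; the Euler fluctuation limit \eqref{e10} is then read off from $\etab^\ve(b)=\ve^{-1/2}(N^\ve\varphi_{ob_+}-\mu\varphi_{ob_+})-\ve^{-1/2}(N^\ve\varphi_{ob_-}-\mu\varphi_{ob_-})$. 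You instead expand the characteristic functional, $\ve^{-2}\int(e^{\ii\ve F}-1-\ii\ve F)\,d\mu\to-\frac12\int F^2d\mu$, which is equally valid (dominated convergence applies since $|e^{\ii x}-1-\ii x|\le x^2/2$ and $\int F^2d\mu<\infty$ for $\mu\in\cM$) and buys you joint Gaussianity of arbitrary linear combinations of $\etab^\ve$ and $\heta^\ve$ in one stroke, something the superposition argument gives only via Cram\'er--Wold. Your covariance computation (vanishing of the cross terms $\mu_2(oa_+\cap ob_-)$, and the exactly-two-sides triangle identity) reproduces \eqref{co9} and \eqref{c67}. Two items you supply are left implicit in the paper and are genuinely useful: the independence of the limits via the vanishing cross-covariance $\mu_2(oa\cap o\,\ve b)\le\mu_2(o\,\ve b)\to0$ combined with joint Gaussianity of the pair, and the observation that identifying the limiting control measure $\hmu_2$ requires continuity of the disintegration $d\mu(v,r|x)$ at $x=0$. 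The one place where your sketch is too quick is the power counting for the $\heta^\ve$ marginal: since $\mu_2(o\,\ve b)=O(\ve)$, you must track explicitly how the prefactor in \eqref{ee2}, the intensity $\ve^{-2}$ of $X^{\ve^2}$ and the weight $\ve^2$ carried by $N^{\ve^2}_1$ combine to leave a nondegenerate limiting variance $\hmu_2(ob)$; carrying this out carefully is the only delicate step in the $\heta$ part (and is exactly where one checks that the stated normalization is the right one), so it should not be waved off as routine bookkeeping.
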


\paragraph*{Ideal gas and hard rod dynamics} We describe the connection between the multitime walk and Brownian field with the hard rod dynamics, established by FFGS\cite{ffgs22}. 

A point $(x,v,r)$ codifies a length zero particle sitting at $x$ at time zero travelling at velocity $v$, and carrying a mark $r$. The ideal gas dynamics of a configuration $X\in\cX$ is defined by
\begin{align}
  \label{Tt1}
  T_tX := \{(x+vt,v,r):(x,v,t)\in X\},\quad t\in\RR.
\end{align}
There is no interaction between particles.
The trajectory $(T_tX)_{t\in\RR}$ coincides with the marked line configuration associated to $X$, with elements $\ell(x,v)$ carrying the mark $r$. The ideal gas conserves Poisson processes, if $X$ is a Poisson process with intensity $\mu\in\cM$, then $T_tX$ is a Poisson process with intensity measure $\mu T_{-t}$, that is, $\mu T_{-t}\varphi = \iiint \varphi(x-vt,v,r) d\mu(x,v,r)$. In particular, if $\mu$ is space translation invariant, the Poisson with intensity measure $\mu$ is invariant for the ideal gas. 

To a given a point $(y,v,r)$ associate a rod, the interval $(y,y+r)$, carrying a velocity $v$. A hard rod configuration  is a set $Y\in \cX$ satisfying that distinct rods do not intersect. The set of hard rod configurations is denoted
\begin{align}
  \label{cY}
  \cY:=\{Y\subset\RR^3: (y,y+r)\cap (\ty,\ty+\tr)=\emptyset,\;\text{for all distinct }(y,v,r), (\ty,\tv,\tr)\in Y\}
\end{align}
Starting with an $Y\in\cY$, each rod travels deterministically with its velocity, until collision with another, faster or slower, rod.  Just before collision time, the right extreme of the fast rod coincides with the left extreme of the slow one. At collision time the rods swap positions, the left extreme of the updated slow rod goes to the left extreme of the fast rod, and the right extreme of the updated fast rod goes to right extreme of the slow rod. See figure. After collision, each rod continues travelling ballistically with its original velocity. Given a hard rod configuration $Y\in\cY$, the hard rod configuration at time $t$ is denoted $U_tY$, and it is rigorously defined by \eqref{Ut3}. The set $\cY$, is conserved by the dynamics, $Y\in\cY$ if and only if $U_tY\in \cY$.

We discuss the relationship between hard rod and ideal gas invariant measures, in the spirit of the results of FNRW \cite{fnrw} for box-ball systems and their linear decomposition. Theorem~\ref{t14} says that a space-shift invariant and $U$-invariant measure on $\cY$ can related via Palm-theory to a space homogeneous Poisson process if the velocity distribution is absolutely continuous.    

The hard rod evolution can be expressed in terms of the multitime walk field.
\begin{theorem}[Surface representation of the hard rod evolution \cite{ffgs22}]
  \label{hH1}
Define 
\begin{align}
  \label{yx1}
  D_0X &:= \bigl\{(x+H_N(x,0),v,r):(x,v,r)\in X\bigr\}.
\end{align}
Then, $Y=D_0X$ is a hard rod configuration, $Y\in\cY$, and $U_tY$ is given by
\begin{align}
  U_tY&= \bigl\{(x+vt+H_N(x+vt,t),v,r):(x,v,r)\in X\bigr\}. \label{uty}
\end{align}
\end{theorem}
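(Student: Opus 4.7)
The plan is to verify both assertions by analyzing, for each fixed $t$, the map $z\mapsto z+H_N(z,t)$ and then tracking its variation along a single line $\ell(x,v)$ to recover the trajectory of one rod. First I would establish that this map is non-decreasing on the space axis at height $t$; this handles simultaneously the claim $D_0X\in\cY$ and the statement that the right-hand side of \eqref{uty} is a hard rod configuration for every $t$. At $t=0$ the segment from $o=(0,0)$ to $(x,0)$ lies on the space axis, so a line $\ell(y,v')$ crosses it iff $y$ is between $0$ and $x$; with the orientation convention of \eqref{ab+}--\eqref{ab-} every such line contributes to $H_N((x,0))$ with a sign depending only on the sign of $x$. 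Ordering the points of $X$ by their space coordinate, the jumps of $H_N(\cdot,0)$ across consecutive atoms precisely accommodate the intervening rod lengths, so that the intervals $\bigl(x+H_N(x,0),\,x+H_N(x,0)+r\bigr)$ are pairwise disjoint. Replacing the space axis by the horizontal at height $t$ and each $x$ by $x+vt$ gives the analogous monotonicity for the RHS of \eqref{uty}.

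Next I would fix a particle $(x,v,r)\in X$ and set $f(t):=x+vt+H_N(x+vt,t)$, so that $b(t):=(x+vt,t)$ traces the line $\ell(x,v)$ as $t$ varies. Outside a discrete set of exceptional times $\{t^*_k\}$ the point $b(t)$ does not lie on any line of the configuration other than $\ell(x,v)$, and a small perturbation of $t$ does not alter the classification of any other $\ell(y,v')\in X$ relative to the segment from $o$ to $b(t)$; the contribution of $\ell(x,v)$ itself to $H_N(b(t))$ is constant along its length. Hence $t\mapsto H_N(x+vt,t)$ is locally constant between the $t^*_k$ and $f'(t)=v$ there, which matches the free flight portion of the dynamics.

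At each exceptional time $t^*_k$ the point $b(t)$ crosses exactly one other line $\ell(y,v')\in X$ with $v'\ne v$. Across the crossing, the side of $b(t)$ with respect to $\ell(y,v')$ flips: from $\lef(y,v')$ to $\rig(y,v')$ if $v>v'$, and conversely if $v<v'$. A case analysis according to which side of $\ell(y,v')$ contains the origin shows that in either sub-case the signed indicator $\uno\{(y,v',r')\in ob_+\}-\uno\{(y,v',r')\in ob_-\}$ changes by $+1$ when $v>v'$ and by $-1$ when $v<v'$, and no other line in the sum defining $H_N$ contributes at $t^*_k$. Therefore $f$ has a pure jump at $t^*_k$ equal to $+r'$ when $v>v'$ and $-r'$ when $v<v'$. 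This coincides with the hard rod collision rule recalled just before the theorem: a faster rod jumps forward by the length of the slow rod it overtakes, and a slower rod is displaced backward by the length of the overtaking fast rod.

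Putting the three parts together, the curves $t\mapsto(f(t),v,r)$ are piecewise linear trajectories with the correct free-flight velocity and the correct collisional jumps, with initial data $Y=D_0X$ at $t=0$. By uniqueness of the hard rod flow away from simultaneous collisions (a null event for the ambient Poisson configurations considered throughout the paper), this collection of trajectories coincides with $U_tY$ and the formula \eqref{uty} follows. I expect the main delicate point to be the case analysis at collisions, where one must check that the signed variation of $H_N$ at a crossing depends only on the ordering of $v$ and $v'$ and not on the position of $o$; measure-zero boundary situations (lines through $o$, ties in space coordinates, simultaneous collisions) are ruled out almost surely for the Poisson line processes to which the theorem is applied.
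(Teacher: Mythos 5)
Your crossing analysis is sound and is in fact the heart of the matter. The case analysis showing that the contribution of a line $\ell(y,v')$ to $H_N(b(t))$ changes by $+r'$ when $v>v'$ and by $-r'$ when $v<v'$, \emph{independently} of which side of $\ell(y,v')$ contains the origin, is precisely the identity $j_N(x,v;t)=H_N(x+vt,t)-H_N(x,0)$ of Lemma \ref{lm6}, and your $t=0$ computation is the identity $m_0^x[N]=H_N(x,0)$. The paper's proof consists of exactly these two identities plus the observation that, by the definition \eqref{Ut3}/\eqref{Ut4} of the dynamics, the time-$t$ position of the rod issued from $(x,v,r)$ is $y_N(x,v;t)=x+m_0^x(N)+vt+j_N(x,v;t)$, which equals $x+vt+H_N(x+vt,t)$ by pure algebra. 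So up to that point you have rederived, in more detail, what the paper declares to ``follow from the definitions''.

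The gap is in your last step. You identify the trajectories $f(t)=x+vt+H_N(x+vt,t)$ with $U_tY$ by appealing to ``uniqueness of the hard rod flow away from simultaneous collisions''. But $U_t$ is not defined in this paper as the solution of the informal collision rule; it is defined by the explicit formula \eqref{Ut3} through the flux $j_{N[C_qY]}$, precisely because for infinite configurations in $\cM$ (where collision times may accumulate) well-posedness of the naive collision dynamics is not available, and no uniqueness theorem for it is stated or proved anywhere you could cite. Moreover the theorem is asserted for deterministic $X$ with $N[X]\in\cM$, whereas your argument discards ties, lines through the origin and simultaneous collisions only ``almost surely for Poisson configurations''; none of these exclusions is needed. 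To close the argument, drop the uniqueness appeal: note that your jump bookkeeping is equivalent to the static set identity $j_N(x,v;t)=N_1(ab_+)-N_1(ab_-)=H_N(x+vt,t)-H_N(x,0)$ with $a=(x,0)$, $b=(x+vt,t)$, read off directly from \eqref{j9} and \eqref{b70} and valid for every such $X$; then substitute this and $m_0^x[N]=H_N(x,0)$ into \eqref{y9}/\eqref{Ut4}, using \eqref{yc8} to pass from $j_{N[C_qY]}$ to $j_N$. This yields \eqref{uty} with no dynamical or genericity input.
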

The  configuration $D_0X$ is called the dilation of $X$ with respect to the origin.

In Section \ref{s9} we prove known and new theorems for the length and fluctuation fields of hard rods, by combining Theorem \ref{sl1} with Theorem \ref{hH1}.

  \section{Poisson processes and white noise}
  Let  $\mu$ be a locally integrable intensity measure on $\RR^d$. Let $X$ be a Poisson process on $\RR^d$ with intensity measure $\mu$. Denote the empirical measure associated to $X$ defined on Borel sets $A$ and $\mu$ integrable functions $\varphi$ by
  \begin{align}
    \label{em1}
    N(A):= |X\cap A|,\qquad N\varphi:=\sum _{x\in X} \varphi(x).
  \end{align}
We can look at the Poisson process as a random process indexed by Borel sets:
$(N(A))_{A\in\cB}$, satisfying
\begin{align}
  &\text{$N(A)$ is a Poisson random variable with mean $\mu(A)$}\\
  &\text{$N(A)$ and $N(B)$ are independent if $A\cap B=\emptyset$}
\end{align}
These properties imply that the covariances are given by 
\begin{align}
  \cov\bigl(N(A),N(B)\bigr)&= \mu(A\cap B),\qquad\Var N(A)= \mu(A),\\
 \cov\bigl(N\varphi,N\psi\bigr)&= \mu(\varphi\psi),\qquad\Var N\varphi= \mu\varphi.
\end{align}

\paragraph*{Law of large numbers} Denote $X^\ve$ a Poisson process with intensity measure $\ve^{-1}\mu$ and consider the rescaled empirical measure $N^\ve$ defined on test functions $\varphi$ and Borel sets $A$, by 
\begin{align}
  \label{ne1}
  N^\ve\varphi:=\ve \sum _{x\in X^\ve} \varphi(x), \quad N^{\ve}(A):= N^\ve 1\{\cdot\in A\}.
\end{align}
We have $E N^\ve(A) =\mu(A)$, $E N^\ve\varphi =\mu\varphi$, and the following law of large numbers. 
\begin{lemma}[Law of large numbers for the empirical measure]\label{lnn}
  \begin{align}
 \lime  N^\ve\varphi \toas \mu\varphi,\qquad   \lime  N^\ve(A) \toas \mu(A), \label{ln2}
\end{align}
for $\mu$ integrable functions $\varphi$. 
\end{lemma}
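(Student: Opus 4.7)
The plan is to couple all Poisson processes $X^\ve$ on a single probability space and reduce the claim to a strong law of large numbers for a compound Poisson process. By linearity of $N^\ve$ in $\varphi$ and the splitting $\varphi=\varphi^+-\varphi^-$, I may assume $\varphi\ge 0$; the second statement in \eqref{ln2} is the specialization $\varphi=\one_A$.

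On an enlarged probability space, let $\Pi$ be a Poisson process on $\RR^d\times[0,\infty)$ with intensity measure $d\mu(x)\otimes dt$. For each $\ve>0$ take $X^\ve$ to consist of the first coordinates of the points of $\Pi$ whose second coordinate lies in $[0,\ve^{-1}]$. By the mapping theorem, $X^\ve$ is a Poisson process on $\RR^d$ with intensity $\ve^{-1}\mu$, and the coupling is monotone in $\ve$. Setting
$$M(t):=\sum_{(x,s)\in\Pi,\ s\le t}\varphi(x),\qquad t\ge 0,$$
the identity $N^\ve\varphi=\ve\,M(\ve^{-1})$ holds by construction, and by Campbell's theorem $M(t)$ is a.s.\ finite with $\EE M(t)=t\,\mu\varphi$ under the hypothesis $\mu\varphi<\infty$.

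Since $\Pi$ has independent increments, $M$ is a compound Poisson Lévy process; the integer-indexed increments $M(n)-M(n-1)$ are i.i.d.\ with mean $\mu\varphi$, so Kolmogorov's classical strong law of large numbers gives $M(n)/n\to\mu\varphi$ a.s.\ as $n\to\infty$. For $t\in[n,n+1]$, monotonicity (valid because $\varphi\ge 0$) yields
$$\frac{n}{n+1}\cdot\frac{M(n)}{n}\;\le\;\frac{M(t)}{t}\;\le\;\frac{n+1}{n}\cdot\frac{M(n+1)}{n+1},$$
so $M(t)/t\to\mu\varphi$ a.s.\ as $t\to\infty$; substituting $t=\ve^{-1}$ proves \eqref{ln2}.

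The main obstacle is the well-definedness of the coupling and of the sum defining $M(t)$ when $\mu$ is infinite. The hypothesis $\mu|\varphi|<\infty$ is exactly what ensures absolute convergence of the series (equivalently, that $M(1)$ has finite expectation), and no further integrability such as $\mu(\varphi^2)<\infty$ is needed; in particular, the more elementary Chebyshev route using $\Var N^\ve\varphi=\ve\,\mu(\varphi^2)$ would be weaker, since it would require $\mu$-square-integrability and only give convergence in probability along a continuous parameter without additional work.
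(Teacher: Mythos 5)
Your proof is correct, but it takes a genuinely different route from the paper. The paper invokes the Superposition Theorem to write $X^\ve$, for integer $1/\ve$, as a union of $\ve^{-1}$ i.i.d.\ copies of a Poisson process with intensity $\mu$, so that $N^\ve\varphi$ becomes a normalized sum of i.i.d.\ variables, and then appeals to Chebyshev plus Borel--Cantelli. You instead build an explicit monotone coupling of all the $X^\ve$ via an auxiliary Poisson process $\Pi$ with intensity $\mu\otimes dt$, identify $N^\ve\varphi$ with $\ve\,M(\ve^{-1})$ for a compound Poisson L\'evy process $M$, apply Kolmogorov's strong law to the integer increments, and interpolate by the sandwich argument. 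Your version buys three things the paper's sketch leaves implicit: (i) it only needs $\mu|\varphi|<\infty$, whereas the Chebyshev route silently requires $\mu(\varphi^2)<\infty$, an assumption not present in the lemma's hypothesis; (ii) the monotone coupling makes the almost-sure statement along the \emph{continuum} of values $\ve\to0$ well defined and actually proves it, while the paper's argument lives on the integer lattice $1/\ve\in\N$ and on processes that are not a priori defined on a common probability space; (iii) the positivity reduction $\varphi=\varphi^+-\varphi^-$ justifies the monotonicity needed for the interpolation. The paper's approach is shorter and emphasizes the superposition picture that recurs in its CLT proof (display \eqref{ii1} is reused for the white-noise limit), which is presumably why the authors chose it; your argument is the more self-contained and the one that matches the stated hypotheses exactly.
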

\begin{proof}
  We have $E N^\ve\varphi=\mu\varphi$ and $\Var N^\ve\varphi = \ve (\mu\varphi^2-(\mu\varphi)^2)$. 
By the Superposition Theorem for Poisson processes \cite{kingman}, 
given an  iid family $(X_i)_{i\ge1}$ of Poisson processes with intensity measure $\mu$, for integer $1/\ve$ we have $X^\ve \eqlaw \cup_{i=1}^{\ve^{-1}} X_i$. Then, 
\begin{align}
  \label{ii1}
 N^\ve\varphi = \ve \sum_{i=1}^{1/\ve}  \bigl(\sum_{x\in X_i}\varphi(x)\bigr).
\end{align}
is a normalized sum of iid random variables with finite variance, converging to their common mean $\mu\varphi$. The proofs only uses Chevichev inequality and Borel Cantelli lemma, so it is valid for every realization of the family $X^\ve$.
\end{proof}

\paragraph*{White noise}
Consider locally integrable measure $\mu$ on $\RR^d$. The Gaussian field $\omega$ indexed by Borel sets $A\in\cB$  satisfying
\begin{align}
  &\text{$\omega(A)$ is a centered Gaussian random variable with variance $\mu(A)$},\\
  &\text{$\omega(A)$ and $\omega(B)$ are independent if $A\cap B=\emptyset$},
\end{align}
is called white noise with control measure $\mu$. 
The covariances are given by 
\begin{align}
  \text{$\cov\bigl(\omega(A),\omega(B)\bigr)= \mu(A\cap B)$.}
\end{align}
The covariances characterize the process. 
See for instance Lalley \cite{Lalley2011GAUSSIANPK} for a construction of Gaussian fields. 

\begin{lemma}[Convergence to white noise]
 Let $\mu$ be a locally integrable measure on $\cB(\RR^d)$, and $N^\ve$ the empirical measure of a Poisson process with intensity measure $\ve^{-1}\mu$. Then, for each integrable function $\varphi$, we have
\begin{align}
 \omega^\ve\varphi\, =\, \frac{N^\ve\varphi-\mu\varphi}{\ve^{\frac12}}\; &\tolaw \; \omega\varphi, \label{lf2}
\end{align}
where $\omega$ is white noise with control measure $\mu$.
\end{lemma}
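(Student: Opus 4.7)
The plan is to compute the characteristic function of the real random variable $\omega^\ve\varphi$ and pass to the limit via Lévy's continuity theorem. The key input is the Campbell-type formula for Poisson processes: for a Poisson process with intensity measure $\nu$ and empirical measure $N$,
\[
\EE\bigl[\exp(it N\psi)\bigr] \;=\; \exp\Bigl(\int \bigl(e^{it\psi(x)}-1\bigr)\,d\nu(x)\Bigr),
\]
valid for bounded $\psi$ and extending to $\psi\in L^1(\nu)\cap L^2(\nu)$ by a standard truncation argument.

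Applying this with $\nu=\ve^{-1}\mu$ and $\psi=\ve^{1/2}\varphi$, and using the centering $-\ve^{-1/2}\mu\varphi$, I obtain
\[
\EE\bigl[\exp(is\,\omega^\ve\varphi)\bigr] \;=\; \exp\Bigl(\ve^{-1}\int\bigl(e^{is\ve^{1/2}\varphi(x)}-1-is\ve^{1/2}\varphi(x)\bigr)\,d\mu(x)\Bigr).
\]
The plan is then to Taylor expand the integrand, using the elementary bound $|e^{iu}-1-iu|\le \tfrac12 u^2$, which yields
\[
\Bigl|\ve^{-1}\bigl(e^{is\ve^{1/2}\varphi(x)}-1-is\ve^{1/2}\varphi(x)\bigr)\Bigr|\;\le\;\tfrac{s^2}{2}\varphi(x)^2,
\]
integrable with respect to $\mu$ provided $\varphi\in L^2(\mu)$. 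Pointwise, the same Taylor expansion gives the integrand's limit $-\tfrac{s^2}{2}\varphi(x)^2$, so dominated convergence produces
\[
\lim_{\ve\to0}\EE\bigl[\exp(is\,\omega^\ve\varphi)\bigr]\;=\;\exp\Bigl(-\tfrac{s^2}{2}\mu(\varphi^2)\Bigr),
\]
which is the characteristic function of a centered Gaussian with variance $\mu(\varphi^2)$, matching the law of $\omega\varphi$. Lévy's continuity theorem finishes the argument.

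An alternative would be to mimic the proof of Lemma \ref{lnn}: invoke the superposition theorem to write $\omega^\ve\varphi = n^{-1/2}\sum_{i=1}^n Z_i$ with $n=1/\ve$ and $Z_i:=\sum_{x\in X_i}\varphi(x)-\mu\varphi$ iid centered with variance $\mu(\varphi^2)$, then apply the Lindeberg-Lévy CLT. The main obstacle for this second route is dealing with non-integer $1/\ve$: one would write $\ve^{-1}=\lfloor\ve^{-1}\rfloor+r_\ve$, treat the remaining mass $r_\ve\mu$ as an independent Poisson term whose contribution has variance $O(\ve)$ and hence vanishes after normalization. The characteristic function approach sidesteps this nuisance entirely. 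The only real hypothesis cost beyond the stated ``$\varphi$ integrable'' is that $\mu(\varphi^2)<\infty$, which is needed for the limit variance and the dominating function to be finite; this should be noted explicitly, either by strengthening ``integrable'' to $\varphi\in L^1(\mu)\cap L^2(\mu)$ or by remarking on it.
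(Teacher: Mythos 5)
Your proof is correct, but it takes a genuinely different route from the paper's. The paper's proof is a one-line appeal to the classical CLT applied to the superposition representation \eqref{ii1}, i.e.\ writing $N^\ve\varphi$ as a normalized sum of $\ve^{-1}$ iid copies of $\sum_{x\in X_i}\varphi(x)$ — exactly the ``alternative'' you sketch at the end. Your main argument instead computes the characteristic function of $\omega^\ve\varphi$ via the Campbell (Lévy--Khintchine) formula for Poisson integrals and passes to the limit by dominated convergence with the bound $|e^{iu}-1-iu|\le\tfrac12 u^2$. What your approach buys: it handles non-integer $\ve^{-1}$ with no extra work (the paper's superposition representation, as written, only makes literal sense for integer $1/\ve$, and the remainder-term patch you describe is exactly what would be needed to make it rigorous along general sequences $\ve\to0$), and it forces the hypothesis $\mu(\varphi^2)<\infty$ into the open — a condition the lemma's statement (``integrable $\varphi$'') omits but which is equally indispensable for the paper's CLT argument, since the iid summands must have finite variance. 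What the paper's approach buys is brevity and consistency with the proof of Lemma \ref{lnn}, which uses the same representation. Your observation that the statement should read $\varphi\in L^1(\mu)\cap L^2(\mu)$ is a genuine, if minor, correction to the lemma as stated.
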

\begin{proof}
Use the central limit theorem applied to the representation \eqref{ii1}. 
\end{proof}


\section{Surfaces generated by marked lines}

A straight line contained in $\RR^2$ can be characterized by two parameters. A popular version is to map the point $(p,\theta)\in\RR_+\times [0,2\pi)$ to the line which is at distance $p$ to the origin and whose perpendicular makes an angle $\theta$ with the $x$ axis. This map is well adapted to produce measures on the space of lines that are invariant by isometries.
We adopt a ballistic codification, seeing the line $\ell(x,v)$ as the trajectory of a traveler moving at constant velocity $v$, visiting the point $x$ at time $0$,
\begin{align}  \notag
  \ell(x,v):=\{(x+vt,t):t\in \RR \},
\end{align}
see Fig.\/\ref{m10}. The function $\ell$ is one to one, the image of $\RR^2$ misses the lines parallel to the space axis.

{\centering
  \includegraphics[width=.40\textwidth, angle=90]{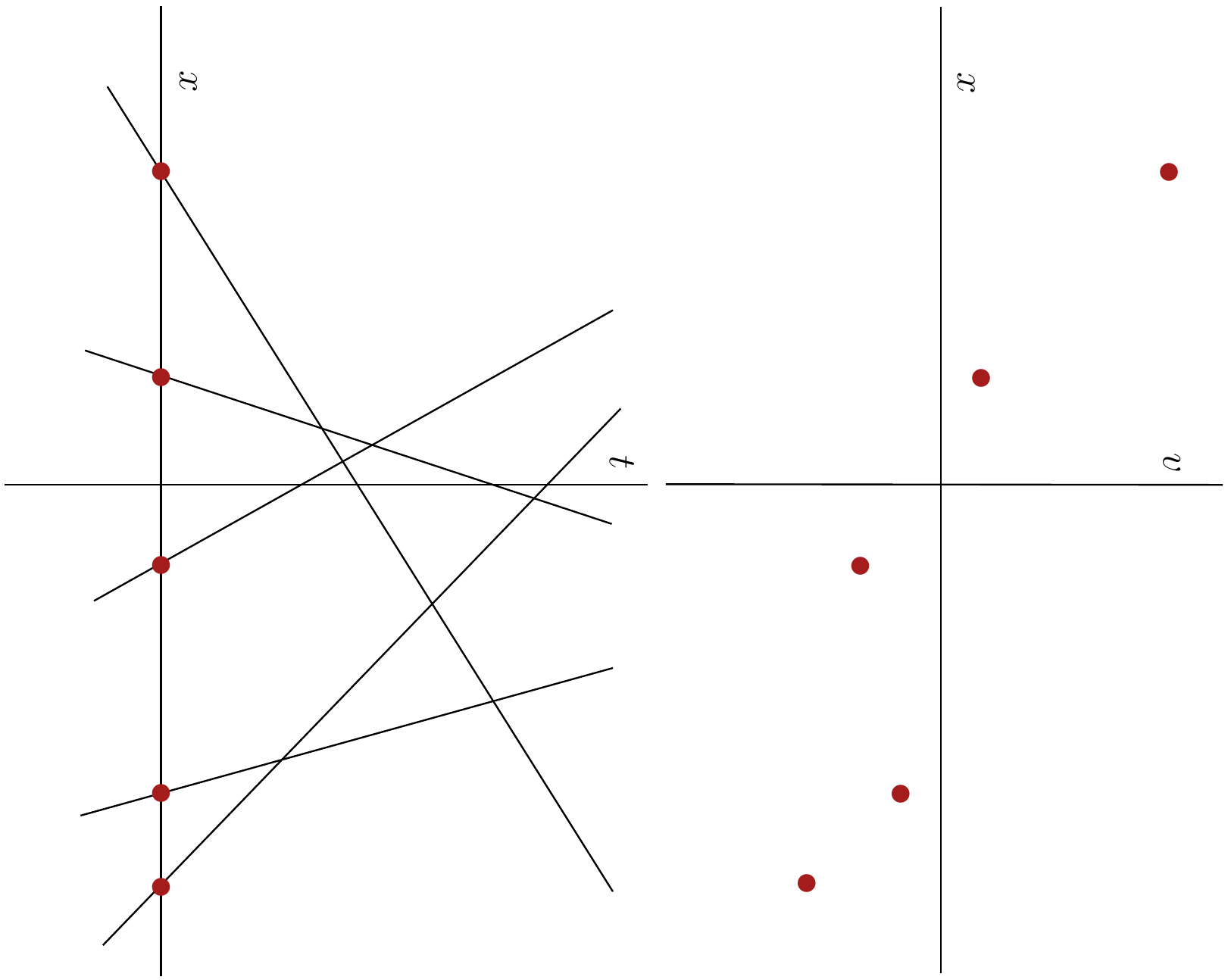}
  \captionof{figure}{Mapping space-velocity points to lines contained in the space-time plane. \label{m10}}
}

Let $a$ and $b$ be distinct  space-time points, denote $\oab$ the segment with extremes $a$ and $b$,
\[\oab:= \bigl\{a u +b (1-u): u\in[0,1]\bigr\}, 
    \quad a,b\in\RR^2, \]
and $ab$ the set of space-velocity points mapped to lines intersecting $\oab$,
\begin{align}
  \label{ab}
  \bab:= \bigl\{(x,v)\in\RR^2: \ell(x,v)\cap \oab\neq\emptyset\bigr\}.
\end{align}
A line $\ell(x,v)$ divides the plane into two half-planes, denoted by
\begin{align}
  \rig(x,v) &:= \bigl\{(\tx,\tit)\in\RR^2: \tx\ge x+\tit v\bigr\},\\
   \lef(x,v) &:= \bigl\{(\tx,\tit)\in\RR^2: \tx< x+\tit v\bigr\}.
\end{align}
The set of lines in $ab$ having $a$ in the left half-plane and $b$ in the right half-plane is denoted $ab_+$; the lines  in $ab\setminus ab_+$ are denoted $ab_-$,
\begin{align}
 \label{ab+}
  ab_+ &:= \{ (x,v)\in ab: a\in\lef(x,v),\, b\in \rig(x,v)\},\\
  ab_- &:= \{ (x,v)\in ab:  a\in\rig(x,v),\, b\in \lef(x,v)\}.
\end{align}

\paragraph*{Surfaces generated by marked lines}

Interpret a point $(x,v,r)\in\RR^3$ as the line $\ell(x,v)$ with an associated mark $r$. Define the height function $h_{(x,v,r)}:\RR^2\to\RR$ by
\begin{align}
  h_{(x,v,r)}(b):= r\,\bigl( 1\{\ell(x,v)\in ob_+\} - \uno\{\ell(x,v)\in ob_-\}\bigr),
       \label{h10}
\end{align}
where $o=(0,0)$ is the origin of the space-time plane. The function $h_{(x,v,r)}$ is constant in the half-planes determined by $\ell(x,v)$, and it has a height difference of~$r$ between the right and left half-planes. The half-plane containing the origin has a height of~$0$. See Figure \ref{LCS-0}.

{\centering
  \includegraphics[width=\textwidth]{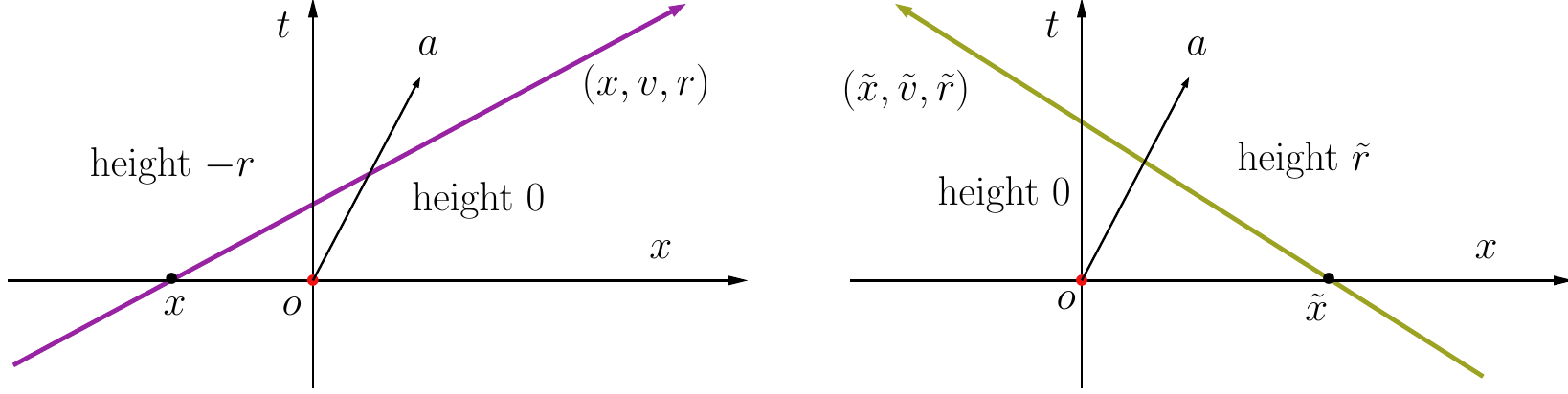}
  \captionof{figure}{Surfaces generated by $(x,v,r)$ and $(\tx,\tv,\tr)$, respectively. We have $h_{(x,v,r)}(a)=-r$ because $o\in\rig(x,v)$ and $a\in\lef(x,v)$, and  $h_{(\tx,\tv,\tr)}(a)=r$ because $o\in\lef(x,v)$ and $a\in\rig(x,v)$.    \label{LCS-0}} 
}\par

Let $N$ be the empirical measure associated to a space-velocity-mark configuration $X$, assume $N\in\cM$ and denote $H_N$ the sum of the simple surfaces,
\begin{align}
 H_N(b) &:= \sum_{(x,v,r)\in X} h_{(x,v,r)}(b)
=    N_1(ob_+)-N_1(ob_-),\label{b70}\end{align}
where $N_1$ was defined in \eqref{Hn2}. 
The surface $H_N$ is null at the origin, $H_N(o)=0$, and the surface differences are given by
\begin{align}
  H_N(b)-  H_N(a) &=   N_1(ab_+)-  N_1(ab_-).
\end{align}

{\centering
  \includegraphics[width=.70\textwidth]{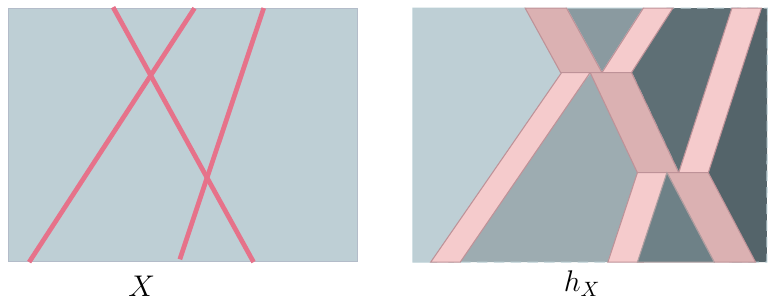}
  \captionof{figure}
  {A point configuration $X$ with 3 lines with marks produces a surface $h_N$ with up to 6 possible heights. The left picture shows 3 lines with marks (invisible), and the right picture is a perspective of the surface, as seen from an observer located at $45^o$ to the left. \label{chr24} }\par
  }

We have mapped a discrete measure $N\in\cM$ to the surface $H_N$, via the quantities $N_1(ab_\pm)$, see Fig.\ref{chr24}. 

\section{Multitime walk field}
\label{srw}

Let $\mu\in \cM$ and $X$ be a Poisson process  on $\RR^3$ with intensity measure $\mu$ and empirical measure~$N$. The random surface  $H_N$, defined by \eqref{b70} is called multitime walk field; its covariances are given by
\begin{align}
  \label{co9}
  \Cov(H_N(a),H_N(b)) = \mu_2(oa\cap ob) = \frac12(\mu_2(oa)+\mu_2(ob)-\mu_2(ab)).
\end{align}
Assume that the measure $\mu\in\cM$ is absolutely continuous in the space coordinate, that is,
\begin{align}
  d\mu(x,v,r) = \rho(x) dx\, d\mu(v,r|x),
\end{align}
with  $\rho$ a locally integrable non-negative function and the family $(d\mu(v,r|x))_{x\in\RR}$ consists of probability measures on the velocity-mark plane $\RR^2$. 
 The height of the field along each straight line $\ell(x,v)$, defined by
\begin{align}
  \label{pf1}
Z_{x,v}(t):=  H_N(x+vt,t),
\end{align}
 is a one-dimensional non-homogeneous continuous time random walk whose generator  $L_{x,v;t}$ at time $t$ acts on test functions $f$ by
\begin{align}
  \label{pf2}
  L_{x,v;t}f(z) &=  \iint \rho(x+(v-w)t)\,d\mu(w,r|x+(v-w)t)\\
  &\qquad\qquad\times\big(\one\{w<v\}f(z+r)+\one\{w>v\}f(z-r)-f(z)\big).
\end{align}
This observation justifies the name ``multitime walk field''.

\paragraph*{Law of large numbers} Let $\mu\in \cM$,  recall the rescaled empirical measure $N^\ve$ defined in \eqref{ne1}, and use \eqref{b70} to define the surface $H_{N^\ve}$. We have the following law of large numbers.
\begin{align}
    H_{N^\ve}(b)
    &\;\toas\; H_\mu(b),\label{l11}
\end{align}
where $H_\mu$ is defined in \eqref{Hm1}. To show the limit \eqref{l11}, denote  $\varphi_A(x,v,r) = r\,\uno \{(x,v,r)\in A\}$, and write $N^\ve_1(A)=  N^\ve\varphi_A$ which by Lemma  \ref{lnn} converges to $\mu\varphi_A=\mu_1(A)$.

\section{Multitime Brownian field}

Let $\mu\in\cM$  and let $\omega_2$ represent white noise in the space-velocity-mark domain $\RR^3$, with control measure $\mu_2$. Define the multitime Brownian field as the process $\etab:\RR^2\to \RR$ given by 
\begin{align}
  \label{e1}
  \etab(a)&:= \omega_2\bigl(oa\bigr),\quad a\in\RR^2.
\end{align}
The process $\etab$ is Gaussian with  covariances 
\begin{align}
  \Cov\bigl(\etab(a),\etab(b)\bigr)
  &=\; \Cov\bigl( \omega_2(oa), \omega_2(ob)\bigr)\\
     &=\; \mu_2(oa\cap ob)\\[1mm]
  &=\;\tfrac12\bigl(\mu_2(oa)+\mu_2(ob)-\mu_2(ab)\bigr),\label{c67}
\end{align}
because $ab = (oa \setminus ob) \cup (ob\setminus oa)$ implying $\mu_2(ab)= \mu_2(oa)+ \mu_2(ob)-2\mu_2(oa\cap ob)$.

An isometry invariant version $\etab^{\iso}$ of the field was introduced by  Lévy \cite{levy-1948}, who called it Brownian motion with several parameters.  The geometrical definition \eqref{e1} was proposed by Chentsov~\cite{chentsov1957levy}, who also observed that $\mu^{\iso}_2(ab)=|b-a|$ is a distance that parametrize the process. For this reason $\ieta$ is  also called Lévy-Chentsov field. The increments of the marginal process along lines $(\ieta(o,a+ut))_{t\in\RR}$ have the same law as those of standard Brownian motion $(W(t))_{t\in\RR}$, for all $a$ and unitary $u\in\RR^2$. Our intensity measure $\mu$ produce fields $\etab$ whose law may be not invariant by isometries, however we still have a (non-homogeneous) multitime Brownian field. The marginals along lines of the field $\etab$ with distance $\dd(a,b)=\mu_2(ab)$ can be written in function of $\ieta$, for each unitary $u\in\RR^2$,
\begin{align}
  \label{ut3}
  (\etab(ut))_{t\in\RR} &\eqlaw \bigl(\ieta\bigl(u\,\dd(o,ut )\bigr)\bigr)_{t\in\RR}.
\end{align}
The marginal profiles along lines satisfy
\begin{align}
\bigl(\etab(t,x+vt)-\etab(0,x)\bigr)_{t\in\RR} 
& \;\eqlaw \;
\bigl(W(\mu_2({a_0a_t})\bigr)_{t\in\RR}\;\;, 
&a_t&:=(t,x+vt) 
;\label{eb1}\\
\bigl(\etab(t,x)-\etab(t,0)\bigr)_{x\in\RR}
& \;\eqlaw \; \
\bigl(W(\mu_2({b_0b_x})\bigr)_{x\in\RR}\;\;,
&b_x&:=(t,x). \label{eb2}
\end{align}
Here $(W(\tau))_{\tau\in\RR}$ is standard two-sided one-dimensional Brownian motion pinned at the origin,  $W(0)=0$. See Takenaka \cite{zbMATH03956132}, McKean \cite{zbMATH03200970}, Fu and Wang \cite{zbMATH07229221}. 

The covariance of height differences is given by
\begin{align}
  \cov\bigl(\etab(b)-\etab(a) ,\etab(\tb )-\etab(\ta)\bigr) &=  \mu_2(ab \cap \ta\tb) \\
  &=\tfrac12\,\bigl(\mu_2(\ta b)+\mu_2(a\tb)-\mu_2(a\ta)-\mu_2(b\tb)\bigr).\label{c68}
\end{align}
To see that, write $ab = ( ab\cap \ta\tb )\cup(  ab\cap a\ta)\cup( ab\cap b\tb )$ and use \eqref{c67} to get
\begin{align}
  \mu_2(ab )&= \mu_2(ab \cap \ta\tb)
                             + \tfrac12\,\bigl(\mu_2(ab)
                             + \mu_2(a\ta)
                             - \mu_2(\ta b)
                             + \mu_2(ab)
                             + \mu_2( b\tb)
                             - \mu_2(a\tb)\bigr),
\end{align}
implying \eqref{c68}.

\section{Surface fluctuations}
\subsection{Euler scaling}
Define
\begin{align}
  \label{e17}
  \etab^\ve(b) := \frac{H_{N^\ve}(b)- H_\mu(b)}{\ve^{\frac12}},\qquad b\in\RR^2.
\end{align}
We have the following convergence
\begin{align}
\etab^\ve\; \tolaw\; \etab,    \label{e10}     
\end{align}
where $\etab$ is the multitime Brownian field with distance $ \dd(a,b):=\mu_2(ab)$.
    \begin{proof}[Proof of \eqref{e10}]
Let $\omega_2$ be white noise in $\RR^3$ with control measure $\mu_2$. By  \eqref{b70},  
\begin{align}
  \etab^\ve(b)&= \frac1{\ve^{\frac12}} \Bigl(N^\ve_1(ob_+)- \mu_1(ob_+)-N^\ve_1(ob_-)+ \mu_1(ob_-)\Bigr) \label{ew1}\\
             &\tolaw \omega_2(ob_+)- \omega_2(ob_-) \eqlaw \omega_2(ob) = \etab(b). \label{ew2}
\end{align}
Denoting $\varphi_A(x,v,r):=r\,\uno\{(x,v,r)\in A\}$,
    \begin{align}
  \frac1{\ve^{\frac12}} (N^\ve_1(A)-\mu_1(A)) =    \frac1{\ve^{\frac12}} (N^\ve\varphi_A-\mu\varphi_A)\; \tolaw\; \omega\varphi_A = \omega_2(A). \label{w2a}
    \end{align}
where the limit follows from \eqref{lf2}. Taking $A\in \{ob,ob_+,ob_-\}$, we get the convergence in \eqref{ew2}; observing that $oa_+$ and $oa_-$ are disjoint, we get identity in law in the same display. 
\end{proof}

Mandelbrot \cite{MR388559} seems to be the first to consider a random walk version of the Lévy's multitime Brownian field. The approach was studied by Mori \cite{zbMATH00034022}, Lifshits \cite{zbMATH03698118}, generalized the fields to indicators. Chapter 8 of Samorodnitsky and Taqqu \cite{zbMATH00614990} for stable isometry invariant fields.
Ossiander \cite{ossiander1984weak} and Ossiander and Pyke \cite{zbMATH03938123} describe this construction for isometry invariant fields and shows its convergence to the Lévy-Chentsov process. Lantuéjoul \cite{Lantuejoul1993} also gives examples of non isometry invariant fields. 
 Fu and Wang study stable fields parametrized by a distance \cite{zbMATH07229221}. Durand and Jaffard \cite{zbMATH06062619} consider the sum of a Poisson Chentsov field to a Levy-Chentsov field.

\begin{remark}[Comparing $H_N$ with Mandelbrot's $M_N$]\rm 
There is a subtle difference between Mandelbrot's function $M_N$ and the field $H_N$ in \eqref{Hn1}. Representing both functions in the space-velocity-mark $\RR^3$, we have 
  \begin{align}
    M_N(b) = N_1(ob) 
             ,\qquad
    H_N(b) = N_1(ob_+)-N_1(ob_-).
  \end{align}
When one crosses a line marked $r$, the sign of the step for $M_N$ depends on the relative position of the origin, while in $H_N$, it depends only on the orientation of the line, so that the surface differences $H_N(b)-H_N(a)$ are covariant by translations. 
\end{remark}

\subsection{Diffusive scaling}
\label{Sff}

  \paragraph*{Time shifts}  We will describe the evolution of the fields as seen from an observer sitting at the level of the surface at $z$, as time $s$ changes. It is convenient to consider a nonnegative density function $\rho$ and an absolutely continuous space locally integrable measure $\mu$ with density~$\rho$. Assume $\mu\in\cM$.
Fix a macroscopic space time point $(z,s)$. Define $\mu_{z,s}$, the space-time translation of $\mu$ by $(z,s)$, the measure acting on test functions $\varphi$ by
\begin{align}
  \label{zs1}
  \mu_{z,s}\varphi := \iiint d\mu(x,v,r)\, \varphi(x +vs-z,v,r).
\end{align}
Let $X^\ve$ be a Poisson process in $\RR^3$ with intensity measure $\ve^{-1}\mu$.  
  Define
  \begin{align}
  H^\ve_{z,s}(x,t) &:= H_{N^\ve}((x,t)+(z,s))-H_{N^\ve}(z,s),\label{hz1}\\
  H_{z,s}(x,t) &:= H_\mu((x,t)+(z,s))-H_\mu(z,s).\label{hz2}
  \end{align}
  These are the empirical and deterministic surfaces as seen from the position $(z,s)$ and height $H_{N^\ve}(z,s)$ and $H_\mu(z,s)$, respectively. Lemma \ref{lnn} implies $H_{N^\ve}(z,s)\toas H_{\mu}(z,s)$, implying that
  \begin{align}
    \label{ln7}
    H^\ve_{z,s}(b) \toas H_{z,s}(b),\quad b\in\RR^2, \quad (z,s)\in\RR^2.
  \end{align}
  
\paragraph*{Fluctuation evolution}
We define fluctuation fields $\heta^{\,\ve}_{z,s}$ and $\teta^{\,\ve}_{z,s}$.

The process $\heta^{\ve}_{z,s}$ looks at the heights deviations at positions  
\begin{align}
  \label{ax}
  \ve (x, t)+(z,s),
\end{align}
relative to the height at $(z,s)$. More precisely, define
\begin{align}
  \label{f33}
  \heta^{\,\ve}_{z,s}(x,t) &:=
  \ve^{-\frac12}\ve^{-1} \bigl( H^\ved_{z,s}(\ve x,\ve t)- H_{z,s}(\ve x,\ve t)\bigr).
\end{align}
The height in \eqref{f33} are of order $\ve$, the factor $\ve^{-1}$ brings them back to 1, and the $\ve^{-\frac12}$ is the right diffusive scaling for $\ve^{-1}$ lines contributing $\ve$ each.

Define
\begin{align}
  \label{ws2}
  d\hmu_{z,s}(x,v,r)&:=  \rho(z-vs)\,dx\,d\mu(v,r|z-vs),\\
 d\hmu_{2,z,s}(x,v,r)&:= r^2\, d\hmu_{z,s}(x,v,r).
\end{align}
Notice that $\hmu_{z,s}$ is space translation invariant.
Denoting  $\heta_{z,s}:\RR^2\to \RR$ the Levy-Chentsov field with distance $\widehat\dd_{z,s}(a,b):=  \hmu_{2,z,s}(ab)$,  we have
   \begin{align}
    \heta^{\,\ve}_{z,s}\tolaw \heta_{z,s}.
   \end{align}

On the other hand, the process $\teta^{\,\ve}_{z,s}$ looks at heights at positions
\begin{align}
  \label{ax}
  (x, t)+(z,s),\quad x,t\in \RR,
\end{align}
from position $(z,s)$, relative to the height at $(z,s)$,
\begin{align}
  \label{f34}
  \teta^{\,\ve}_{z,s}(x,t) :=
  \ve^{-1} \bigl(H^\ved_{z,s}(x,t)-H_{z,s}(x,t)\bigr),
\end{align}
where the $H$ fields are defined in \eqref{hz1} and \eqref{hz2}.

Recalling that $d\mu(x,v,r)= \rho(x)\,dx\,d\mu(v,r|x)$, define
\begin{align}
  \label{ws3}
  d\tmu_{z,s}(x,v,r)&:=  \rho(z+x-vs,v,r)\,dx\,d\mu(v,r|z+x-vs),\\
 d\tmu_{2,z,s}(x,v,r)&:= r^2\, d\tmu_{z,s}(x,v,r).
\end{align}

Denoting  $\teta_{z,s}:\RR^2\to \RR$ the Levy-Chentsov field with distance $\widetilde\dd_{z,s}(a,b):=  \tmu_{2,z,s}(ab)$,  we have
   \begin{align}
    \teta^\ve_{z,s}\tolaw \teta_{z,s}.
   \end{align}
   \begin{theorem}[Joint convergence to multi time Brownian fields]
     \label{bf6}
We have the joint convergence
   \begin{align}
     \label{jo1}
      (\heta^\ve_{z,s}, \teta^\ve_{z,s}) \tolaw (\heta_{z,s}, \teta_{z,s})
   \end{align}
 where $  \heta_{z,s}$ and $ \teta_{z,s}$ are independent.     
   \end{theorem}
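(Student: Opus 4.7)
My plan is to write $\teta^\ve_{z,s}$ as a scaled copy of $\heta^\ve_{z,s}$ plus a remainder that is \emph{exactly} independent of $\heta^\ve_{z,s}$, then combine this with the two marginal limits already established just above the theorem.

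Fix a test point $\tilde b=(x,t)$ and set $a=(z,s)$, $b_1=a+\ve\tilde b$, $b_2=a+\tilde b$. Since the three points are collinear and $b_1\in\overline{ab_2}$, the segment decomposes as $\overline{ab_2}=\overline{ab_1}\cup\overline{b_1b_2}$, the two pieces sharing only the point $b_1$. Telescoping $H_{N^\ved}(b_2)-H_{N^\ved}(a)$ at $b_1$, subtracting the corresponding identity for $H_\mu$, and dividing by $\ve$ yields
\[
\teta^\ve_{z,s}(\tilde b)\,=\,\ve^{1/2}\heta^\ve_{z,s}(\tilde b)\,+\,\teta^{\ve,\mathrm{far}}_{z,s}(\tilde b),\quad \teta^{\ve,\mathrm{far}}_{z,s}(\tilde b):=\ve^{-1}\bigl[(H_{N^\ved}(b_2)-H_{N^\ved}(b_1))-(H_\mu(b_2)-H_\mu(b_1))\bigr].
\]
By the representation $H_N(c)-H_N(c')=N_1(c'c_+)-N_1(c'c_-)$, the first summand is measurable with respect to $N^\ved$ restricted to the bundle $L_1\subset\RR^3$ of lines meeting $\overline{ab_1}$, while the far remainder is measurable with respect to $N^\ved$ on the bundle $L_2$ of lines meeting $\overline{b_1b_2}$. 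Since a line not through $b_1$ meets the straight segment $\overline{ab_2}$ in a single point, $L_1\cap L_2$ is $\mu$-null, and Poisson independence on disjoint Borel subsets of $\RR^3$ yields the pre-limit independence $\heta^\ve_{z,s}(\tilde b)\perp\teta^{\ve,\mathrm{far}}_{z,s}(\tilde b)$ for every $\ve>0$.

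The established marginals $\heta^\ve_{z,s}\tolaw\heta_{z,s}$ and $\teta^\ve_{z,s}\tolaw\teta_{z,s}$ imply in particular that $\heta^\ve_{z,s}(\tilde b)$ is tight, so $\ve^{1/2}\heta^\ve_{z,s}(\tilde b)\to 0$ in probability and Slutsky gives $\teta^{\ve,\mathrm{far}}_{z,s}(\tilde b)\tolaw\teta_{z,s}(\tilde b)$. The pre-limit independence upgrades the marginal convergences to the joint convergence $(\heta^\ve_{z,s}(\tilde b),\teta^{\ve,\mathrm{far}}_{z,s}(\tilde b))\tolaw(\heta_{z,s}(\tilde b),\teta_{z,s}(\tilde b))$ with independent coordinates, and re-adding the vanishing term proves \eqref{jo1} at a single pair of test points.

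For the general finite-dimensional convergence at several test points the decomposition uses distinct intermediate points $b_{1,j}=a+\ve\tilde b_j$ and exact pre-limit independence is lost. I would then argue through joint Gaussianity of the limit together with vanishing covariance: the identity $\cov(N^\ved_1(A),N^\ved_1(B))=\ve^2\mu_2(A\cap B)$ combined with the fact that every relevant intersection sits inside the microscopic line bundle $L_{1,i}$, of $\mu_2$-measure $O(\ve)$, gives $\cov(\heta^\ve_{z,s}(\hat b_i),\teta^\ve_{z,s}(\tilde b_j))=O(\ve^{1/2})$, and for jointly Gaussian limits uncorrelatedness forces independence. The main obstacle is precisely this mixed-scaling multivariate Poisson CLT—because the microscopic component is normalized by $\ve^{-3/2}$ while the macroscopic one is by $\ve^{-1}$, the test function entering the characteristic-function computation is $\ve$-dependent—but its $L^2(\mu)$ norm is kept $O(1)$ by the local absolute continuity of $\mu$ (the density $\rho$), reducing the step to the standard Poisson CLT for a uniformly controlled test-function family.
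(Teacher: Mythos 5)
The paper states Theorem \ref{bf6} without giving a proof (the only guidance is the heuristic after \eqref{ee2} that the masses crossing the microscopic segment have mean of order $\ve^{-1}$ while those crossing the macroscopic one have mean of order $\ve^{-2}$), so there is no written argument to compare against; judged on its own, your proposal is essentially correct and fills the gap in a reasonable way. Your single-test-point decomposition $\teta^\ve_{z,s}(\tilde b)=\ve^{1/2}\heta^\ve_{z,s}(\tilde b)+\teta^{\ve,\mathrm{far}}_{z,s}(\tilde b)$ is clean: the two summands are measurable with respect to the restrictions of the Poisson process to the line bundles $ab_1$ and $b_1b_2$, whose intersection (lines through $b_1$, plus the line containing $\overline{ab_2}$) is $\mu$-null because $\mu$ is absolutely continuous in the space coordinate, so the restriction theorem gives exact pre-limit independence, and Slutsky does the rest. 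You are also right that this exact independence is lost for general finite-dimensional marginals and that the complete argument is the second one: the overlap of the microscopic bundle $ab_{1,i}$ with anything else has $\mu_2$-mass $O(\ve)$, so $\Cov(\heta^\ve_{z,s}(\hat b_i),\teta^\ve_{z,s}(\tilde b_j))=\ve^{-3/2}\ve^{-1}\cdot\ve^2\cdot O(\ve)=O(\ve^{1/2})$, and a jointly Gaussian limit with vanishing cross-covariance is a product. This matches the spirit of how the paper treats all its fluctuation limits (reduce to white-noise limits of $N^\ve\varphi$ via \eqref{lf2} and compute covariances of set-indexed Gaussians). The one step you assert rather than prove is the mixed-scaling multivariate Poisson CLT. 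It does go through, but note that since $\cM$ only controls $\mu_2$, you cannot use a third-moment bound on the exponent $\int(e^{\ii g_\ve}-1-\ii g_\ve)\,\ve^{-2}d\mu$; you need a Lindeberg-type truncation of the marks (the test function is $g_\ve\approx\theta_1\ve^{1/2}\psi_\ve+\theta_2\ve\,\phi_\ve$ with $|\psi_\ve|,|\phi_\ve|\le r$, and $\int_{ab}r^2\one\{r>\delta\ve^{-1/2}\}d\mu\to0$ by dominated convergence since $\mu_2(ab)<\infty$). With that standard repair, your argument is a complete proof; the exact-independence decomposition for coincident test points is a bonus the heuristic in the paper does not supply.
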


 \section{Hard rods}\label{s8}

We start describing the ideal gas. 
Think of an element  $(x,v,r)\in X$ as a massless particle travelling at velocity $v$ and carrying a mark $r$. Given a point configuration $X$ with empirical measure $N\in\cM$, the ideal gas evolution is given by the operator $T_t$ defined by
 \begin{align}
   \label{Tt6}
   T_tX:=\{(x+vt,v,r): (x,v,r)\in X\}.
 \end{align}
 One encounters the particle $(x,v,r)$ at time $t$ at the position $x+vt$, carrying the same velocity and mark.

We define the hard rod dynamics following Boldrighini, Dobrushin and Sukhov \cite{bds}. 
Let $X$ be a point configuration in the space-velocity-mark $\RR^3$ with empirical measure $N=N[X]$. The signed added rod length between $z$ and $x$ is defined by
\begin{align}
  \label{m8}
  m_z^x(N) &:= \sum_{(\tx,\tv,\tr)\in X} r\,\bigl(\one\{z\le \tx<x\}-\one\{x\le \tx<z\} \bigr).
\end{align}
Recall $\cY$ is the space of hard rod configurations defined in \eqref{cY}.
The set of rod configurations with no rod containing $z$ is defined by
\begin{align}
  \label{y0}
  \cY_z:=\{Y\in\cY:z\notin (y,y+r), \text{ for all }(y,v,r)\in Y\}.
\end{align}
Notice that if $z\in Y$, then $Y\in\cY_z$.

The dilation $D_z:\cX\to\cY_z$ of an ideal gas configuration $X$ with respect to $z$ is defined by
 \begin{align}
   \label{dx6}
   D_zX&:= \bigl\{\bigl(x+m_z^x(N),v,r\bigr): (x,v,r)\in X\bigr\}.  
 \end{align}
 Define the contraction $C_z:\cY_z\to\cX$ by
 \begin{align}
   \label{cy7}
   C_zY :=  \bigl\{\bigl(y-m_z^y(N[Y]),v,r\bigr): (y,v,r)\in Y\bigr\}.
 \end{align}
 The length flow along the line $\ell(x,v)$ is defined by
 \begin{align}
   \label{j9}
   j_{[N[X]}(x,v;t):= \sum_{(\tx,\tv,\tr)\in X}\tr \,\bigl(\one\{\tx>x,\tx+t\tv<x+vt\}- \one\{\tx<x,\tx+t\tv>x+vt\}\bigr)
 \end{align}
Notice that $q\in Y$ implies $Y\in\cY_q$. The position at time $t$ of the particle $(q,q+r)$,   with velocity $v$, associated to the point $(q,v,r)\in Y$, is given by
\begin{align}
  \label{test1}
  q+vt + j_{N[C_qY]}(q,v;t).
\end{align}
The position does not depend on $r$. The tagged particle jumps forward by $r'$ when it is crossed by particle with size $r'$ with velocity $w<v$, and backwards by $r'$, if it is crossed by a particle with velocity $w>v$. The net length of particles crossing the tagged rod is just the length flux of ideal gas particles in $C_qY$ along the line $\ell(q,v)$. 

The hard rod evolution is the operator $U_t:\cY\to\cY$, defined by
 \begin{align}
   \label{Ut3}
   U_tY = \bigl\{(q+vt + j_{N[C_qY]}(q,v;t) ,v,r): (q,v,r)\in Y\bigr\}.
 \end{align}
It is useful to express $U_t$ for dilations of ideal gas configurations. 
Let $X\in \cX$ be a configuration with $N[X]\in\cM$ and consider $Y=D_0X\in\cY_0$.  Notice that
 \begin{align}
   \label{yc8}
   \text{if }  y= x+m_0^x(N), \text{ then }  j_{N[C_yY]}(y,v;t) = j_N(x,v;t),
 \end{align}
and the position at time $t$ of the quasi particle at $y=x+m_0^x(N)$ at time 0, is given by
\begin{align}
  \label{y9}
   y_{N[X]}(x,v;t) := x+m_0^x(N[X])+ vt+ j_{N[X]}(x,v;t).
 \end{align}
The evolution of $Y=D_0X\in\cY_0$ expressed in terms of $X$ is given by
 \begin{align}
   \label{Ut4}
  U_tY= U_tD_0X=  \bigl\{(y_{N[X]}(x,v;t),v,r): (x,v,r)\in X\bigr\}.
 \end{align}
The particles evolving with \eqref{Ut3}/\eqref{Ut4} interchange positions when colliding with other particle. See Figure \ref{collision-rule}.
 \begin{center}
  \includegraphics[width=.3\textwidth]{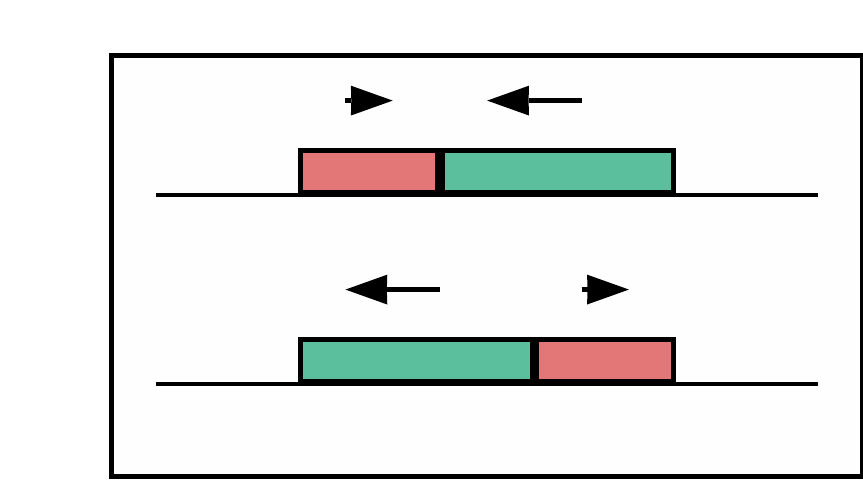}\quad\quad
  \includegraphics[width=.4\textwidth, clip, trim= 8mm 0 0 0]{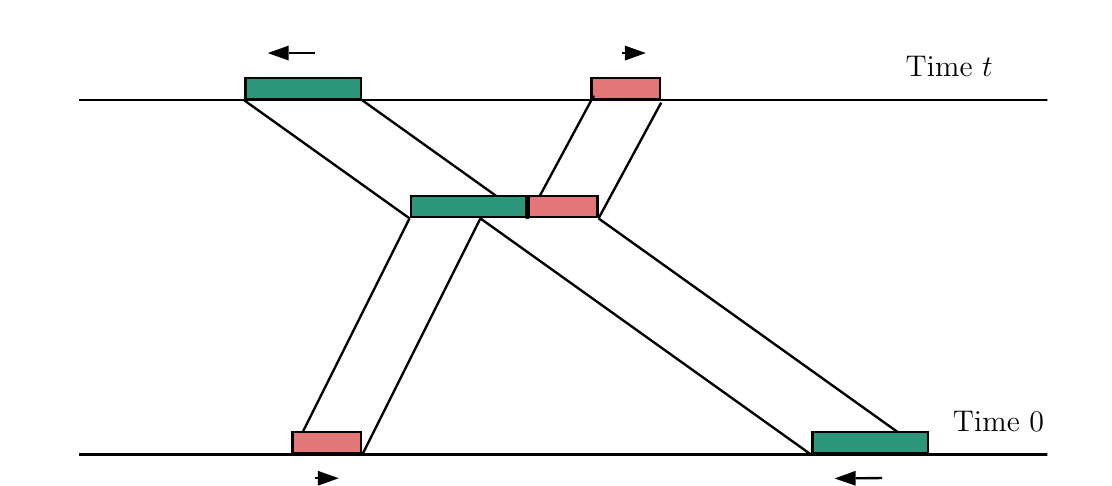}
  \captionof{figure}{Quasi-particle evolution. At collision particles exchange positions. \label{collision-rule} }
\end{center}

 \subsection{Hard rods and multi time walk field}\label{s91}
The hard rod dynamics can be expressed in terms of the multitime walk field, as stated in  Theorem \ref{hH1}. The following lemma follows from the definitions.
\begin{lemma}[Flows and tagged quasiparticles in terms of surfaces]
  \label{lm6}
Assume that a configuration $X$ has empirical measure $N=N[X]\in\cM$, and let $H_N$ be the corresponding surface. Then,
  \begin{align}
       j_N(x,v;t) &= H_N(x+vt,t)-H_N(x,0), \label{jh3} \\
    y_N(x,v;t) &= x+vt+ H_N(x+vt,t), \label{yh3}\\
    m_0^x[N] &= H_N(x,0).\label{mh3}
  \end{align}
\end{lemma}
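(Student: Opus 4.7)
The plan is to unfold the geometric definitions of $ob_\pm$ and $ab_\pm$ and match them to the indicator sums in $m_0^x$, $j_N$, and $y_N$. The three identities are essentially bookkeeping statements: (\ref{mh3}) and (\ref{jh3}) are direct translations between a set-indexed sum and a ``crossing'' sum, and (\ref{yh3}) is then algebraic. Throughout, boundary inequalities (strict vs.\ non-strict) will only differ on a set of configurations of measure zero, so I will work with generic $X$ and ignore these.

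\textbf{Step 1: prove} $m_0^x(N) = H_N(x,0)$. Set $b=(x,0)$, so the segment $\oob$ lies on the space axis. For a point $(\tx,\tv,\tr)$, the origin satisfies $o\in\lef(\tx,\tv)$ iff $0<\tx$, and $b\in\rig(\tx,\tv)$ iff $\tx\le x$. Hence $ob_+=\{0<\tx\le x\}$ (independent of $\tv$), and symmetrically $ob_-=\{x\le\tx<0\}$. Inserting these into \eqref{Hn1} recovers exactly the signed sum defining $m_0^x(N)$ in \eqref{m8}.

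\textbf{Step 2: prove} $j_N(x,v;t)=H_N(x+vt,t)-H_N(x,0)$. Set $a=(x,0)$ and $b=(x+vt,t)$; then $H_N(b)-H_N(a)=N_1(ab_+)-N_1(ab_-)$. Decode the half-plane conditions:
\begin{align*}
   a\in\lef(\tx,\tv) &\iff \tx>x, \qquad b\in\rig(\tx,\tv) \iff \tx+t\tv\le x+vt,\\
   a\in\rig(\tx,\tv) &\iff \tx\le x, \qquad b\in\lef(\tx,\tv) \iff \tx+t\tv> x+vt.
\end{align*}
Thus $ab_+$ collects the points that were on the right of $\ell(x,v)$ at time $0$ and on the left at time $t$ (the ``slower'' crossings, $\tv<v$), and $ab_-$ collects those that were on the left at time $0$ and on the right at time $t$ (the ``faster'' crossings, $\tv>v$). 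These are precisely the two indicator events in \eqref{j9}, with the matching signs. So $N_1(ab_+)-N_1(ab_-)=j_N(x,v;t)$.

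\textbf{Step 3: derive} $y_N(x,v;t)=x+vt+H_N(x+vt,t)$. Starting from the definition \eqref{y9} and inserting Steps 1 and 2,
\begin{align*}
  y_N(x,v;t) = x+m_0^x(N)+vt+j_N(x,v;t) = x+H_N(x,0)+vt+\bigl(H_N(x+vt,t)-H_N(x,0)\bigr),
\end{align*}
which telescopes to the claimed expression.

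The only potentially subtle point is the matching of the strict/non-strict inequalities coming from the convention that $\rig(x,v)$ contains the line itself; this introduces boundary discrepancies with the indicators in \eqref{m8} and \eqref{j9}, but each such boundary event has $\mu$-measure zero for $\mu\in\cM$ (and is avoided by typical realizations of $X$), so it contributes nothing to the sums $N_1(\cdot)$. This bookkeeping is the only real hazard in the proof.
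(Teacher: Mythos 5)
Your proof is correct and follows the same route the paper intends: the paper gives no argument beyond ``the lemma follows from the definitions,'' and your three steps simply carry out that unwinding of $ob_\pm$ and $ab_\pm$ against the indicator sums in \eqref{m8} and \eqref{j9}, plus the telescoping for \eqref{y9}. Your flagged caveat about strict versus non-strict boundary inequalities is real but harmless, and is implicitly glossed over by the paper as well.
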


 \begin{proof}[Proof of Theorem \ref{hH1}]
Use \eqref{mh3} to show that 
$D_0X$ is the configuration $Y$ defined by \eqref{yx1}.
The identity between  \eqref{uty} and \eqref{Ut4} follows from \eqref{yh3}. 
\end{proof}

 \subsection{Hard rod evolution as seen from a tagged particle}

 Consider $X\in \cX$ with empirical measure $N\in\cM$. Let  $Y=D_0X \in\cY_0$ and denote
\begin{align}
     o_t(X ) &:= j_N(0,0;t),\qquad  X\in\cX, \label{hh13}
\end{align}
the position of a zero-length zero-velocity particle
starting from $z=0$. Indeed $o_t(X )= y_N(0,0;t)$, and this position does not depend on $r$. The hard rod  operator $\hU_t$ describing the hard rod dynamics as seen from the tagged zero-length zero-velocity particle is
given by
\begin{align}
  \hU_t Y  :=         D_0T_t C_0Y ,\qquad Y \in \cY_0.\label{hh11}
\end{align}
Let  $S_z$ be the space shift operator defined by
\begin{align}
  \label{shift}
  S_zX  := \{(x+z,v,r):(z,v,r)\in X \}.
\end{align}
Define the {hard rod dynamics} for configurations $Y\in\cY_0$ by
\begin{align}
 U_t Y & := S_{o_t(C_0Y )}\hU_t Y ,\qquad Y \in \cY_0,\label{hh12}
\end{align}
$o_t(C_0Y )$ is the position  at time $t$ of the particle starting at $o:=(0,0,0)$, added to the configuration~$Y \in\cY_0$ at time~$0$. Since the  particle   $o$ has zero length, its addition does not modify the evolution of the rods of~$Y $.

If $Y \in\cY\setminus\cY_0$, then $Y$ has a rod $(q,v,r)$ containing the origin, that is, with $q<0<q+r$. Since $S_{q}Y \in\cY_0$, we can use  \eqref{hh12} to define
\begin{align}
  U_t Y  := S_{-q}U_t S_{q}Y ,\qquad Y \in\cY\setminus\cY_0. \label{Ut5}
\end{align}
The reader can prove the equivalence of the definitions:
\begin{lemma}[Equivalence of definitions]
For any $Y\in\cY$ such that $N[Y]\in\cM$, the definitions of $U_t$ in \eqref{Ut3}-\eqref{Ut4} and \eqref{hh12}-\eqref{Ut5} coincide.
\end{lemma}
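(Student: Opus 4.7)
My plan is to verify the equivalence first for $Y\in\cY_0$, where both definitions can be read off directly from Theorem \ref{hH1} and Lemma \ref{lm6}, and then extend to $\cY\setminus\cY_0$ by translation covariance.

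\emph{Reduction to $\cY_0$.} I would first check that Definition 1 is translation covariant, i.e.\ $U_t S_a=S_aU_t$. This is structural: for each rod $(q,v,r)\in Y$, the contraction $C_qY$ is built from differences of positions, so $C_{q+a}(S_aY)=S_a(C_qY)$; and the flow $j_{N[\cdot]}(\cdot,v;t)$ defined in \eqref{j9} depends only on relative positions of lines and the reference point, so $j_{N[S_aX]}(q+a,v;t)=j_{N[X]}(q,v;t)$. Consequently \eqref{Ut3} transforms covariantly. Since Definition 2 for $Y\in\cY\setminus\cY_0$ is by construction $U_tY=S_{-q}U_tS_qY$ for the unique rod containing the origin, proving the equivalence on $\cY_0$ automatically extends it to all of $\cY$.

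\emph{The case $Y\in\cY_0$.} Set $X:=C_0Y$, so that $Y=D_0X$ and $N:=N[X]\in\cM$; write $H_N$ for the associated multitime walk field. By \eqref{Ut4} and \eqref{yh3}, Definition 1 reads
\begin{align*}
U_tY \;=\;\bigl\{\,(x+vt+H_N(x+vt,t),\,v,\,r)\;:\;(x,v,r)\in X\,\bigr\}.
\end{align*}
On the other side, Definition 2 gives $U_tY=S_{o_t(X)}D_0T_tX$ with $o_t(X)=j_N(0,0;t)$. Applying \eqref{jh3} with $x=0$, $v=0$ and $H_N(o)=0$ yields $o_t(X)=H_N(0,t)$. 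Let $N':=N[T_tX]$ be the ideal gas empirical measure at time $t$; the rod $(x,v,r)\in X$ contributes to $D_0T_tX$ the element $(x+vt+m_0^{x+vt}(N'),v,r)$, which after the shift by $H_N(0,t)$ becomes
\begin{align*}
\bigl(x+vt+m_0^{x+vt}(N')+H_N(0,t),\,v,\,r\bigr).
\end{align*}

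\emph{Key identity.} The equivalence is therefore reduced to the horizontal height increment formula
\begin{align*}
m_0^{x+vt}(N[T_tX]) \;=\; H_N(x+vt,t)-H_N(0,t).
\end{align*}
The right hand side equals $N_1(ab_+)-N_1(ab_-)$ with $a=(0,t)$ and $b=(x+vt,t)$; direct inspection of the definitions of $\lef(\tx,\tv)$ and $\rig(\tx,\tv)$ shows that a line $\ell(\tx,\tv)$ contributes to this difference exactly when $\tx+\tv t$ lies between $0$ and $x+vt$, with the sign determined by the sign of $x+vt$. This is precisely the signed count defining $m_0^{x+vt}(N[T_tX])$ in \eqref{m8}, since each particle of $T_tX$ sits at position $\tx+\tv t$. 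Substituting the identity into the expression from Definition 2 cancels the two copies of $H_N(0,t)$ and reproduces the formula for Definition 1, completing the proof.

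\emph{Main obstacle.} The only non-routine point is the key identity linking $m$ on the time-$t$ fiber with the horizontal increment of $H_N$; the rest is translation covariance and bookkeeping. Even here the difficulty is not conceptual but is a careful matching of boundary conventions in \eqref{m8} versus \eqref{ab+}--\eqref{ab-} (open versus half-open intervals); because $\cM$ puts no atoms on the relevant lines in the generic situation this is harmless, and otherwise the conventions chosen in the paper are set up so that the two indicator sums agree.
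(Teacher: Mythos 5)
The paper gives no proof of this lemma (it is explicitly left to the reader), and your argument is correct and is clearly the intended one: reduce to $\cY_0$ by the shift covariance of \eqref{Ut3}, then use Lemma \ref{lm6} to compute $o_t(C_0Y)=H_N(0,t)$ and the key identity $m_0^{x+vt}(N[T_tX])=H_N(x+vt,t)-H_N(0,t)$ (which is \eqref{mh3} applied to $T_tX$ together with the observation that the lines of $T_tX$ meet the time-$0$ axis exactly where the lines of $X$ meet the time-$t$ axis), after which the two copies of $H_N(0,t)$ cancel and both definitions yield \eqref{uty}. The only step you use implicitly is that $D_0C_0=\mathrm{id}$ on $\cY_0$, when you write $Y=D_0X$ for $X=C_0Y$; this is the same convention the paper invokes in the proof of Proposition \ref{propo25}, so it is harmless, as is the boundary-convention issue you already flag.
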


\subsection{Invariant measures for the ideal gas}
We say that a measure $P$ on $\cX$ is {$T$-invariant} if $PT_{-t}=P$ for all $t\in\RR$ and that  $P$ is {space shift invariant} or $S$-invariant  if $PS_{-z}=P$ for all $z\in\RR$. The inverse the operator $T_t$ and $S_z$ are $T_{-t}$ and $S_{-z}$, respectively. 

Several works have proven that under mild conditions on the marginal distribution of the velocities, including absolutely continuous velocity distributions, the invariant measures for this process are mixtures of homogeneous Poisson processes, Breiman \cite{zbMATH03190769}, Weiss and Herman \cite{zbMATH03185403}.  Eyink and Spohn \cite{MR1766344} show that the set of ergodic translation invariant measures that have finite density, second moment velocities and finite entropy per volume consist of Poisson processes with constant intensity. Goldstein \cite{zbMATH03562241} shows ergodic properties of the ideal gas. 

The Mapping Theorem \cite{kingman} for Poisson processes implies that if $X$ is a Poisson process with intensity measure $\mu$, then $T_tX$ is a Poisson process with intensity measure $\mu T_{-t}$; in particular, $|T_tX\cap A)|$ is a Poisson random variable with mean $\mu T_{-t}(A)$, for each Borel set $A$ with $\mu(A)<\infty$. The empirical measure of $T_tX$ is $NT_{-t}$.

The first moment measure of a point process $X$ with law $P$ is the expectation of the empirical measure $N$ on $\RR^3$ defined by $\mu:= \int N[X] P(dX)$; $\mu$ is the mean measure.
If $P$ is $S$-invariant and $\mu$ is the mean measure of $P$, then $\mu$  takes the form
\begin{align}
  \label{ftf40}
    \mu(dx,dv,dr) = \rho\,dx \, \nu(dv,dr),
  \end{align}
where the density $\rho$ is a positive constant and $\nu$ is a probability measure on $\RR\times \RR_{\ge0}$.

An $S$-invariant measure $P$ on $\cX$ is $S$-mixing if
  \begin{align}
    \lim_{z\to\pm\infty}\; \bigl|\sint g\,\tg S_{z}\, dP - (\sint g\, dP)\,(\sint \tg \,dP)\bigr| =0,
  \end{align}
  for test functions $g,\tg:\cX\to\RR_{\ge0}$ with finite expectation and compact space support.

Let $X$ be a point process in $\cX$ with distribution $P$ and empirical measure $N\in\cM$, such that the $n$-point correlation function of $P$ is absolutely continuous with density $\rho_n$, $n\ge1$. That is, $\rho_n:(\RR^3)^n\to \RR_{\ge0}$ satisfy
\begin{gather}
  \label{corr1}
\sint\bigl[N(A_1)\dots N(A_n) \bigr] \,P(dX)=\sint_{A_1}\dots\sint_{A_n}
  \rho_n(\zeta_1,\dots,\zeta_n)\,d\zeta_i\dots d\zeta_n,\\
  \text{where $\zeta_i=(x_i,v_i,r_i)$ and $d\zeta_i= dx_i\,dv_i\,dr_i$}, \label{zi7}
\end{gather}
for any collection of bounded, pairwise disjoint Borel sets $A_1,\dots,A_n \subset \RR^3$. The particle density is given by $\rho:=\rho_1$.
 \begin{proposition}[Mixing and Poisson]
   \label{pp5}
Let $X$ be a point process on $\cX$ with distribution $P$, and correlations $\rho_n$ satisfying \eqref{corr1}. If $P$ is $T$-invariant and $S$-mixing,
  then $X$ is a Poisson process with intensity~$\rho=\rho_1$.  
\end{proposition}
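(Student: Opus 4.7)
The plan is to show that all correlation densities factorize, $\rho_n(\zeta_1,\ldots,\zeta_n)=\prod_{i=1}^n \rho_1(\zeta_i)$ Lebesgue-almost everywhere, and then invoke the classical fact that a point process whose factorial moment measures coincide with the powers of a Radon intensity must be the Poisson process with that intensity.

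\textbf{Step 1 (invariances for $\rho_n$).} $S$-invariance (inherent in $S$-mixing) yields $\rho_n(S_y\zeta_1,\ldots,S_y\zeta_n)=\rho_n(\zeta_1,\ldots,\zeta_n)$ for every $y\in\RR$. Since $T_t$ preserves $3$-dimensional Lebesgue measure (its Jacobian equals $1$), $T$-invariance of $P$ applied in \eqref{corr1} with the change of variable $\zeta\mapsto T_t\zeta$ gives
\[
 \rho_n(T_t\zeta_1,\ldots,T_t\zeta_n)=\rho_n(\zeta_1,\ldots,\zeta_n)\quad\text{a.e., for every }t\in\RR.
\]
Composing both invariances with $y=-v_nt$ one obtains, on the Lebesgue full-measure set where the velocities are pairwise distinct, that $\rho_n(\zeta_1,\ldots,\zeta_n)$ is invariant under replacing $x_i$ by $x_i+(v_i-v_n)t$ for $i<n$ while keeping $\zeta_n$ fixed. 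Since $v_i-v_n\neq 0$, the parameter $t$ sweeps all possible independent spatial translations of the first $n-1$ particles.

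\textbf{Step 2 (factorization of $\rho_n$).} For $n=2$, apply the $S$-mixing hypothesis to $g=N(A_1)$ and $\tilde g=N(A_2)$, for bounded Borel sets $A_1,A_2\subset\RR^3$ whose velocity projections are disjoint. Rewriting $\tilde g(S_zX)=|X\cap S_{-z}A_2|$ and using \eqref{corr1} together with $S$-invariance of $\rho_2$, the quantity $\int g\,\tilde g\circ S_z\,dP$ equals $\int_{A_1}\!\int_{A_2}\rho_2(S_z\zeta_1,\zeta_2)\,d\zeta_1 d\zeta_2$ for $|z|$ large, so mixing implies
\[
 \int_{A_1}\!\int_{A_2}\rho_2(S_z\zeta_1,\zeta_2)\,d\zeta_1 d\zeta_2\;\underset{z\to\infty}{\longrightarrow}\;\int_{A_1}\rho_1\,d\zeta_1\cdot\int_{A_2}\rho_1\,d\zeta_2.
\]
By Step~1 the left-hand side is constant in $z$, so the equality holds without a limit. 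Varying $A_1,A_2$ over a generating family of velocity-separated rectangles gives $\rho_2=\rho_1\otimes\rho_1$ a.e.\ on $\{v_1\neq v_2\}$, hence a.e.\ on $(\RR^3)^2$ since the diagonal is Lebesgue-null. For $n\ge 3$ one proceeds by induction: apply $S$-mixing to $g=\prod_{i=1}^{n-1}N(A_i)$ and $\tilde g=N(A_n)$; the inductive hypothesis $\rho_{n-1}=\rho_1^{\otimes(n-1)}$ secures the finite expectation of $g$ demanded by the hypothesis, and Step~1 is then used to make $\zeta_n$'s spatial position arbitrary relative to the others.

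\textbf{Main obstacle.} The delicate point is that $S$-mixing provides a uniform configuration-wide spatial shift $S_z$, whereas combining $T$- and $S$-invariance produces a \emph{velocity-dependent} shift of individual positions. The resolution is to restrict to tuples with pairwise distinct velocities --- a set of full Lebesgue measure --- on which any prescribed spatial separation can be realized by an appropriate choice of $t$. Passing from the resulting integrated identities to the pointwise a.e.\ factorization is a routine Lebesgue-differentiation argument, while the final implication (factorization of all correlations forces Poisson) is the classical uniqueness of the probability generating functional.
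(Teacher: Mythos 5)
Your overall strategy --- use $T$-invariance together with $S$-invariance to drive points with distinct velocities apart, invoke $S$-mixing to factorize the correlation densities, and conclude via the moment/correlation characterization of the Poisson process --- is the same as the paper's, and your treatment of $n=2$ is correct; it is in fact more careful than the paper's about what the two-test-function definition of $S$-mixing literally provides. The closing appeal to uniqueness (factorized correlation densities force Poisson) matches the paper's citation of Proposition 4.12 in Last--Penrose.

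The gap is in Step 1 as it is used for $n\ge 3$, and it propagates into your inductive step. Composing $T_t$ with $S_{-v_nt}$ yields only the \emph{one-parameter} family of shifts $x_i\mapsto x_i+(v_i-v_n)t$ for $i<n$ with $\zeta_n$ fixed; the single parameter $t$ does not ``sweep all possible independent spatial translations of the first $n-1$ particles''. The orbit of $(x_1,\dots,x_n)$ under the full group generated by $S$ and $T$ is the two-dimensional set $\{(x_i+v_it+y)_i : t,y\in\RR\}$, so for $n\ge 3$ and generic pairwise distinct velocities $\rho_n$ is \emph{not} invariant under shifting $x_n$ alone: that would require $(0,\dots,0,1)$ to lie in the span of $(1,\dots,1)$ and $(v_1,\dots,v_n)$, which forces $v_1=\dots=v_{n-1}$. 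Consequently $\int_{A_1\times\cdots\times A_{n-1}\times S_{-z}A_n}\rho_n$ is not constant in $z$, and you cannot transfer the $z\to\infty$ mixing limit back to $z=0$ the way you legitimately do for $n=2$. What the two-parameter invariance does give is that all pairwise distances along the orbit $x_i\mapsto x_i+v_it$ tend to infinity, and this is exactly what the paper exploits: it reads $S$-mixing directly as a multi-point clustering property of the correlation densities, $\rho_n(\zeta_1-v_1t,\dots,\zeta_n-v_nt)\to\rho_1(\zeta_1)\cdots\rho_1(\zeta_n)$. If you insist on the literal two-function form of $S$-mixing, you need an additional argument (or a strengthened $n$-fold mixing hypothesis) to pass from two-fold to $n$-fold clustering; as written, your induction does not close for $n\ge 3$.
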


\begin{proof}
Since $P$ is $T$-invariant, the one-point correlation function of $P$ must be $S$-invariant: $\rho(x,v,r)=\rho(x+vt,v,r)$.  By $T$-invariance, the $n$-correlation function $\rho_n$ satisfies
  \begin{align}
    \rho_n(\zeta_1,\dots,\zeta_n)& = \rho_n(\zeta_1-v_1t, \dots, \zeta_n-v_nt),\quad t\in\RR,
  \end{align}
  where abusing notation $\zeta_i-tv_i:= (x_i-tv_i,v_i,r_i)$. Since $P$ is $S$-mixing,
  \begin{align}
   \lim_{t\to\infty}  \rho_n(\zeta_1-v_1t, \dots, \zeta_n-v_nt) = \rho(\zeta_1)\cdots \rho(\zeta_n),
  \end{align}
  which is the $n$-correlation function of a Poisson process with intensity $\rho$.
Proposition 4.12 in Last-Penrose \cite{last-penrose} implies that $X$ is a Poisson process with intensity $\rho$.
\end{proof}

\paragraph*{Discrete velocities} 
Let  $V$ be a finite or countable set of velocities and for each $w\in V$, denote  $X_w:= \{(q,v,r)\in X: v=w\}$ the subset of points in $X$ with velocity $w$. Denote by $P_w$ the distribution of $X_w$.
\begin{proposition}[Mixing and independence for discrete velocities]
  \label{mixing-13}
  Let $P$ be an $S$-invariant probability on $\cX$ with marginals $(P_w)_{w\in V}$ and first moment measure $\rho_1\in\cM$. (a) If the $w$-marginals are independent, then $P$ is $T$-invariant. (b) If $P$ is $S$-mixing and $T$-invariant, then the $w$-marginals are independent.
\end{proposition}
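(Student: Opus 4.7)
Plan.

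Part (a) is a direct check. Assume $P=\bigotimes_{w\in V} P_w$ with mutually independent $w$-marginals. Since $P$ is $S$-invariant, each marginal $P_w$ is $S$-invariant. The $T_t$ action decomposes velocity by velocity: since every particle of velocity $w$ shifts uniformly by $wt$, we have $(T_tX)_w = S_{wt}X_w$. Consequently the $w$-marginal of $PT_{-t}$ equals $P_w S_{-wt} = P_w$, and because $PT_{-t}$ is again a product over $w$ (the map is diagonal on velocities), we conclude $PT_{-t}=P$.

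For part (b), the key identity, obtained by composing $T_t$-invariance with $S_{-w_1 t}$-invariance of $P$, is
\begin{equation*}
E\!\left[\prod_{i=1}^n f_i(X_{w_i})\right] \;=\; E\!\left[\prod_{i=1}^n f_i\!\left(S_{(w_i-w_1)t}X_{w_i}\right)\right],
\end{equation*}
valid for any distinct $w_1,\dots,w_n\in V$, bounded $f_i$ of compact space support, and any $t\in\RR$. Indeed, $T_t$ shifts the velocity-$w_i$ component by $w_it$, and then $S_{-w_1t}$ subtracts $w_1t$ uniformly. Note also $f_i(S_{\tau}X_{w_i})=(f_i\circ\text{ev}_{w_i})(S_{\tau}X)$, because $S_{\tau}$ preserves velocities.

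For $n=2$, set $g(X):=f_1(X_{w_1})$ and $\tilde g(X):=f_2(X_{w_2})$: both are compactly supported in space, and $f_2(S_{(w_2-w_1)t}X_{w_2})=\tilde g(S_{(w_2-w_1)t}X)$. Taking $t\to\infty$, $(w_2-w_1)t\to\pm\infty$ since $w_2\neq w_1$, and $S$-mixing gives
\begin{equation*}
E[f_1(X_{w_1})f_2(X_{w_2})] \;=\; \lim_{t\to\infty} E\!\left[g\cdot(\tilde g\circ S_{(w_2-w_1)t})\right] \;=\; E[g]\,E[\tilde g] \;=\; E[f_1]\,E[f_2].
\end{equation*}
Thus $X_{w_1}$ and $X_{w_2}$ are independent.

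For the general case I would proceed by induction on $n$: assuming that $X_{w_2},\dots,X_{w_n}$ are jointly independent, apply the key identity and let $t\to\infty$ to decouple $f_1(X_{w_1})$ from the remaining factors. The anticipated obstacle is that the product $\prod_{i\geq 2} f_i(S_{(w_i-w_1)t}X_{w_i})$ is \emph{not} of the form $\tilde g\circ S_z$ for a single fixed $\tilde g$, because distinct velocities translate at distinct speeds; hence the mixing hypothesis does not apply verbatim. I would resolve this in one of two ways. Route (i): argue at the level of correlation functions, mirroring the proof of Proposition \ref{pp5}. The $T$-invariance forces $\rho_n(\zeta_1,\dots,\zeta_n)=\rho_n(\zeta_1-v_1t,\dots,\zeta_n-v_nt)$, and for any tuple whose velocities partition by $V$ the points with distinct velocities drift apart as $t\to\infty$; $S$-mixing then forces $\rho_n$ to factorize over velocity classes, which is equivalent to independence of the $X_w$ marginals. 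Route (ii): exploit the inductive hypothesis to compute $E[\prod_{i\geq 2}f_i(S_{(w_i-w_1)t}X_{w_i})]=\prod_{i\geq 2}E[f_i]$ (using $S$-invariance of each $P_{w_i}$), and combine with 2-point $S$-mixing applied iteratively, peeling off factors by their drift order. Either route extends pairwise independence to full mutual independence.
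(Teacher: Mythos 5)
Your part (a) and the two-velocity case of part (b) coincide with the paper's own argument: the paper likewise uses the fact that $(T_tX)_w=S_{wt}X_w$ to reduce $T$-invariance to $S$-invariance of each marginal in (a), and in (b) writes $P\bigl(\cap_w\{X_w\in A_w\}\bigr)=P\bigl(\cap_w\{X_w\in S_{tw}A_w\}\bigr)$ for sets depending on a common bounded space interval and lets $t\to\infty$. The obstacle you flag for $n\ge 3$ --- that a product of factors drifting at pairwise distinct speeds is not a single fixed function composed with one shift, so the stated two-point $S$-mixing condition does not apply verbatim --- is genuine, and the paper's proof does not resolve it either: it simply asserts the product limit ``by $S$-invariance and $S$-mixing.'' You are, if anything, more careful than the source here. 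Of your two proposed repairs, route (i) (factorization of the correlation functions $\rho_n$ along velocity classes, mirroring the proof of Proposition \ref{pp5}) is the cleaner one and fits the paper's standing assumption that correlation densities exist and determine the law; route (ii) as written does not yet close the gap, because ``peeling off factors by drift order'' still requires applying mixing to a $t$-dependent test function, which is exactly the difficulty you identified --- you would need either a multi-fold mixing hypothesis or uniformity of the two-point mixing over a compact family of test functions. Stating the needed hypothesis explicitly (or working at the level of $\rho_n$) would make your proof complete where the paper's is merely asserted.
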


\begin{proof}
 (a) Under the free gas dynamics the $w$-marginal at time $t$ is a translation by $wt$ of the time zero marginal, which has law $P_w$ for all $t$. Shifted marginals are also independent, so the superposition of the marginals at time $t$ has law $P$, showing $P$ is $T$-invariant.

 (b) Let $X=\cup_wX_w$ have $T$-invariant and mixing distribution $P$. Then the marginal law of $X_w$ is also mixing. Take measurable sets $A_w\subset \cX_w$, all depending on the same bounded interval of positions. For any finite number of velocities $w$, use $T$-invariance to get
 \begin{align}
   P\bigl(\underset{w}\cap\{X_w\in A_w\} \bigr) = P\bigl(\underset{w}\cap \{X_w\in S_{tw}A_w\}\bigr)\;  \underset{t\to\infty}{\longrightarrow}\; \prod_wP\bigl(\{X_w\in A_w\} \bigr),\notag
 \end{align}
by $S$-invariance and $S$-mixing. 
\end{proof}

\subsection{Palm measures and hard rod invariant measures}

\paragraph*{Palm measures} Recall $\cY_0$ is the set of hard rod configurations with no rod containing the origin, as defined in \eqref{y0}. 
Let $Q$ be an $S$-invariant measure on $\cY$ giving positive mass to $\cY_0$, $Q(\cY_0)>0$. Define the Palm measure of $Q$ by
\begin{align}
 \palm(Q) := Q(\,\cdot\,|\cY_0),
\end{align}
the measure  $Q$ conditioned to the event ``no rod contains the origin''.
For $Y\in\cY_0$ and $z\in\RR$, define
\begin{align}
  b(z,Y) :=  
  \text{ solution $b$ of } \int_0^b 1\{q\notin \cup_{(y,v,r)\in Y}(y,y+r)\} dq = z.
\end{align}
The Lebesgue measure of the empty space in $Y$ between $0$ and $b(z,Y)$ is $z$, if $z$ is positive and $-z$ otherwise. Define the empty space shift by $z$ by
\begin{align}
  \label{101}
\hS_zY :=  S_{b(z,Y)}Y = D_0S_zC_0Y, \quad Y\in \cY_0, \;z\in\RR. 
\end{align}
We say that a measure $\hQ$ on $\cY_0$ is empty space shift invariant, or  $\hS$-invariant, if $\hQ = \hQ\hS_z$ for all $z\in\RR$.
\begin{lemma}[$S$ and $\hS$ invariance]
  \label{102}
  Let $X$ be distributed with $P$ on $\cX$ with first moment $\rho_1\in \cM$. Then, the distribution of $Y:=D_0X \in\cY_0$ is $PC_0$, and
  \begin{align}
    \text{ $P$ is $S$-invariant\quad if and only if \quad $PC_0$ is $\hS$-invariant. }
  \end{align}
\end{lemma}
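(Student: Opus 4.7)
The plan is to reduce both assertions to the fact that $D_0$ and $C_0$ are mutually inverse measurable bijections on configurations of full $P$-measure, together with the observation that by its very definition \eqref{101} the empty-space shift is the conjugate of the physical shift by this bijection, $\hS_z = D_0 \circ S_z \circ C_0$.

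The first step is to check that $D_0$ and $C_0$ are well-defined as measurable maps on the relevant configuration sets. This is where the hypothesis $\rho_1\in\cM$ enters: it guarantees that the signed added rod length $m_0^x(N)$ in \eqref{m8} is an absolutely convergent sum $P$-a.s.\ for every $x\in\RR$, so both \eqref{dx6} and \eqref{cy7} yield valid configurations, and the identities $C_0D_0X=X$, $D_0C_0Y=Y$ hold on sets of full measure. With this in hand, the first claim of the lemma is the standard pushforward identity under the convention $PT(A):=P(TA)$: for measurable $A\subset\cY_0$,
\[
 P(Y\in A)\;=\;P(X\in D_0^{-1}A)\;=\;P(X\in C_0 A)\;=\;P(C_0 A)\;=\;(PC_0)(A).
\]

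For the equivalence of invariances I would just plug the conjugation identity into the definition of $\hS$-invariance of $PC_0$. For measurable $A\subset\cY_0$,
\begin{align*}
 (PC_0\,\hS_z)(A) \;&=\; P\bigl(C_0\,\hS_z A\bigr) \;=\; P\bigl(C_0 D_0 S_z C_0\,A\bigr) \;=\; P\bigl(S_z C_0 A\bigr),
\end{align*}
whereas $(PC_0)(A) = P(C_0 A)$. Hence $PC_0\,\hS_z = PC_0$ for all $z\in\RR$ is equivalent to $P(S_z C_0 A) = P(C_0 A)$ for all $z$ and all measurable $A\subset\cY_0$. Because $C_0$ is a bijection, the sets $B:=C_0 A$ exhaust the measurable subsets of $\cX$ (of full $P$-measure), so the condition is the same as $PS_z = P$ for all $z$, i.e., the $S$-invariance of $P$.

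There is essentially no analytic obstacle: the only substantive input is the a.s.\ absolute convergence of $m_0^x(N)$, which is exactly what the assumption $\rho_1\in\cM$ provides. Beyond that the lemma is a formal consequence of the bijection $D_0 \leftrightarrow C_0$ and the intertwining $C_0\hS_z = S_z C_0$.
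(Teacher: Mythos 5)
Your proof is correct and follows the same route as the paper: the paper's entire proof is ``The lemma follows from \eqref{101}'', i.e.\ from the conjugation identity $\hS_z = D_0 S_z C_0$ together with $C_0=D_0^{-1}$, which is exactly the argument you spell out. You have simply made explicit the pushforward bookkeeping and the role of $\rho_1\in\cM$ that the paper leaves implicit.
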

\begin{proof}
  The lemma follows from \eqref{101}.
\end{proof}

\emph{Inverse Palm}. Given an $\hS$-invariant measure $\hQ$ on $\cY_0$, denote $Q:=\palm^{-1}\hQ$, the inverse-Palm measure defined by
\begin{align}
  \int \varphi(Y) Q(dY) := \int \frac{b(z,Y)}{\int  b(z,Y)\,\hQ(dY)} \frac{1}{b(z,Y)}\int_0^{b(z,Y)} \varphi(S_xY) dx\;\hQ(dY), \quad z\neq 0. 
\end{align}
We multiplied and divided by $b(z,Y)$ to make transparent the following interpretation. 
To obtain a sample of $P$, choose $Y$ with the size biased law $b(z,Y)\hQ(dY)/\int  b(z,Y)\hQ(dY)$ and then shift the origin to a random point uniformly distributed in the interval $(0, b(z,Y))$. The resulting measure $Q$ is $S$-invariant and its definition does not depend on the choice of~$z\neq 0$.

\begin{proposition}[Free gas and hard rod invariance]
  \label{propo25}
  If $P$ is an $S$-invariant measure on $\cX$ with first moment $\rho_1\in \cM$, then $PC_0$ is $\hS$-invariant on $\cY_0$, and 
  \begin{align}
    P \text{ is } T\text{-invariant} \quad\text{if and only if }\quad  PC_0 \text{ is } \hU\text{-invariant}.
  \end{align}
\end{proposition}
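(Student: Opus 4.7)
}

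The plan is to treat the two assertions separately. The statement that $PC_0$ is $\hS$-invariant on $\cY_0$ is immediate from Lemma~\ref{102}, so no additional argument is needed for it. All the content is in the equivalence of $T$-invariance of $P$ with $\hU$-invariance of $PC_0$, and the proof of that equivalence will mirror Lemma~\ref{102} almost verbatim, with $S$ replaced by $T$ and $\hS$ replaced by $\hU$.

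The engine is the definition \eqref{hh11}, $\hU_t = D_0\, T_t\, C_0$ on $\cY_0$. Comparing \eqref{dx6} and \eqref{cy7} shows that $D_0:\cX\to\cY_0$ and $C_0:\cY_0\to\cX$ are mutual inverse bijections, so this identity is really saying that dilation at the origin is a conjugacy between the ideal gas flow on $\cX$ and the Palm hard rod flow on $\cY_0$; explicitly, $\hU_t(D_0 X) = D_0(T_t X)$ for every $X\in\cX$ with $N[X]\in\cM$.

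I would then transport this conjugacy to the level of measures, using the push-forward convention of the paper in which $PC_0$ is the law of $D_0 X$ when $X\sim P$. A one-line computation using $C_0\circ D_0=\mathrm{id}_\cX$ yields
\[
  (PC_0)\hU_{-t}\;=\;(PT_{-t})\,C_0 .
\]
Because $C_0$ is a bijection, the right-hand side equals $PC_0$ if and only if $PT_{-t}=P$, which is exactly the claimed equivalence between $T$-invariance of $P$ and $\hU$-invariance of $PC_0$.

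The main obstacle is not algebraic but measure-theoretic: one has to verify that, under the hypothesis $\rho_1\in\cM$, the added-rod-length $m_0^x(N[X])$ of \eqref{m8} and the length flow $j_N$ appearing in Lemma~\ref{lm6} are $P$-a.s.\ locally finite, so that $D_0$ and $C_0$ really are measurable mutual inverses between full-measure measurable subsets of $\cX$ and $\cY_0$, on which the short calculation above makes unambiguous sense. This bookkeeping is routine once the bijection is in hand, and the rest of the argument is purely formal.
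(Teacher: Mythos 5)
Your proposal is correct and follows essentially the same route as the paper: the $\hS$-invariance is quoted from Lemma~\ref{102}, and the equivalence is obtained from the conjugacy $\hU_t = D_0 T_t C_0$ with $D_0$ and $C_0$ mutually inverse, which is exactly the paper's two-line argument (there written pointwise as $\hU_t Y = D_0 T_t X \eqlaw D_0 X$ and $T_t X = C_0\hU_t Y \eqlaw C_0 Y$, rather than as your pushforward identity $(PC_0)\hU_{-t}=(PT_{-t})C_0$). The measure-theoretic bookkeeping you flag is indeed left implicit in the paper.
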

The proposition says that $P$ is invariant for the ideal gas dynamics if and only if $PC_0$ is invariant for the hard rod dynamics as seen from the tagged rod $o=(0,0,0)$, defined in \eqref{hh11}. In terms of $\hQ$ and $Q:=\palm^{-1}\hQ$, it says that $\hQ$ is $\hU$-invariant if and only if $QD_0$ is $T$-invariant. 
\begin{proof}
Let $X$ be distributed with $P$ and $Y :=D_0X$. If $P$ is $T$-invariant, then by \eqref{hh11}, 
$  \hU_t Y = D_0T_tX \stackrel{\text{law}}=  D_0X = Y$. Hence $ PC_0$ is $\hat U$-invariant, as $Y$ has law $ PC_0$.
Reciprocally, 
$T_tX
    =  T_t C_0Y
    =  C_0 \hT_t Y
    \stackrel{\text{law}}=  C_0Y = X$.
\end{proof}

\begin{proposition}[Harris \cite{Harris71}]
  \label{harris}
  Let $Q$ be an $S$-invariant measure on $\cY$ and $\hQ$ its Palm measure on $\cY_0$. Then
  \begin{align}
    \text{$Q$ is $U$-invariant\quad if and only if\quad $\hQ$ is $\hU$-invariant}
  \end{align}
\end{proposition}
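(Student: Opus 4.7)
The plan is to exploit the Palm / inverse-Palm bijection (given in the excerpt preceding Proposition \ref{propo25}) between $S$-invariant measures $Q$ on $\cY$ and $\hS$-invariant measures $\hQ$ on $\cY_0$, and to show that this bijection intertwines $U$-invariance on $\cY$ with $\hU$-invariance on $\cY_0$. The mechanism is the identity $U_t Y = S_{o_t(C_0 Y)}\,\hU_t Y$ for $Y\in\cY_0$ from \eqref{hh12}, together with the translation covariance $U_t S_z = S_z U_t$: these realize $\hU_t$ as the induced dynamics of $U_t$ on the cross-section $\cY_0$, with the displacement $o_t(C_0 Y)$ playing the role of a return cocycle.

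For the forward direction, assume $Q$ is $U$-invariant. For a bounded measurable $f:\cY_0\to\RR$, I would write
\[
Q(\cY_0)\int f(\hU_t Y)\,\hQ(dY) \;=\; \int_{\cY_0} f\bigl(S_{-o_t(C_0 Y)} U_t Y\bigr)\,Q(dY),
\]
then substitute $Y'=U_t Y$ via $U$-invariance of $Q$ and absorb the random spatial shift via $S$-invariance. The crucial check is that $Y\in\cY_0$ if and only if $S_{-o_t(C_0 Y)}U_t Y\in\cY_0$, because $o_t(C_0 Y)$ always lies in an empty-space region of $U_t Y$ by construction; this gives $Q(\cY_0)\int f\,d\hQ$.

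For the reverse direction, given $\hQ$ that is $\hU$-invariant, evaluate $\int \varphi(U_t Y)\,Q(dY)$ for bounded $\varphi:\cY\to\RR$ via the inverse-Palm formula. After using $U_t S_x = S_x U_t$ and \eqref{hh12}, the integrand becomes $\varphi(S_{x+o_t(C_0 Y)}\,\hU_t Y)$, integrated over $x\in(0,b(z,Y))$ against $\hQ(dY)$. Substitute $\tilde Y = \hU_t Y$ using $\hU$-invariance of $\hQ$ and change variable $\tilde x = x + o_t(C_0 \hU_{-t}\tilde Y)$; the resulting integration range is again an empty-space interval of $\tilde Y$, so by $\hS$-invariance of $\hQ$ and the inverse-Palm representation one recovers $\int\varphi\,dQ$.

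The main obstacle is verifying the change-of-variables step in the reverse direction: one must check that, after the substitution $Y\mapsto \hU_{-t}\tilde Y$, the shifted interval $\bigl(o_t(C_0\hU_{-t}\tilde Y),\,o_t(C_0\hU_{-t}\tilde Y)+b(z,\hU_{-t}\tilde Y)\bigr)$ is indeed an empty-space interval of $\tilde Y$ of the right length for matching the inverse-Palm formula, exploiting that $\hU_t$ rearranges rods without altering the total empty-space length swept by the tagged zero-length particle. A smoother alternative is to invoke Proposition \ref{propo25}: push $\hQ$ forward via $C_0$ to an ideal-gas measure $P$ on $\cX$, so that $\hU$-invariance of $\hQ$ becomes $T$-invariance of $P$; then construct $Q$ directly from $P$ via a random-translation-averaged dilation and reduce $U$-invariance of $Q$ to $T$-invariance of $P$ through Theorem \ref{hH1}, bypassing the explicit Palm manipulation altogether.
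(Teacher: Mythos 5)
First, note that the paper gives no internal proof of Proposition \ref{harris}: it is quoted from Harris \cite{Harris71} (see also Port--Stone \cite{PortStone73}), so there is no argument of the authors to compare yours against. Judged on its own terms, your sketch correctly assembles the right objects --- the cocycle $o_t(C_0Y)$, the skew-product relation $U_tY=S_{o_t(C_0Y)}\hU_tY$ of \eqref{hh12}, and the Palm/inverse-Palm correspondence --- but it has a genuine gap at the one step that carries all the content. In the forward direction you propose to ``substitute $Y'=U_tY$ via $U$-invariance of $Q$ and absorb the random spatial shift via $S$-invariance.'' $S$-invariance of $Q$ only gives $QS_{-z}=Q$ for a \emph{deterministic} $z$; the displacement $o_t(C_0Y)$ is a configuration-dependent random shift, and a stationary measure is in general not invariant under such shifts (a size-biasing effect appears). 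Making this step rigorous is exactly what Palm calculus is for: one needs an exchange/inversion identity (Neveu's exchange formula, equivalently a mass-transport argument) together with the facts that $o_t$ is an additive cocycle, that it maps the empty space of $Y$ into the empty space of $U_tY$, and that it preserves empty-space Lebesgue measure. This is the substance of Harris's theorem, and your sketch assumes it rather than proves it; the same difficulty resurfaces in the change of variables of your reverse direction, which you honestly flag as ``the main obstacle'' but leave unresolved.

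Second, the ``smoother alternative'' you offer does not bypass the problem. Proposition \ref{propo25} lives entirely on the $\cX$/$\cY_0$ side: it converts $T$-invariance of $P$ into $\hU$-invariance of $PC_0=\hQ$. To conclude $U$-invariance of $Q=\palm^{-1}\hQ$ on $\cY$ you must still show that the inverse-Palm map intertwines $\hU$-invariance with $U$-invariance --- which is precisely the statement of Proposition \ref{harris}, so that route is circular. If you want a self-contained proof, the cleanest path is to take the exchange formula built into the inverse-Palm definition, $\int \varphi\, dQ \propto \int_{\cY_0}\int_0^{b(z,Y)}\varphi(S_xY)\,dx\,\hQ(dY)$, verify that $x\mapsto x+o_t(C_0 S_{-x}\,\cdot\,)$ induces a measure-preserving bijection between the empty space of $Y$ and that of $U_tY$, and transport $\varphi\circ U_t$ through this bijection; alternatively, cite Harris \cite{Harris71} or Port--Stone \cite{PortStone73} as the paper does.
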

 See also Port-Stone \cite{PortStone73}, and FNRW \cite{fnrw} for a similar discrete deterministic dynamics.

Wraping up, we get a hard rod version of Theorem 1.5 in FNRW \cite{fnrw} for the box ball system.
\begin{theorem}[Factorization of $S$-mixing invariant measures]\label{t14}
Let $Q$ be an $S$-invariant and $U$-invariant measure on $\cY$. Let $\hQ:=Q(\cdot|\cY_0)$ and assume that $P:=\hQ D_0$ on $\cX$ is $S$-mixing. If the velocity marginal is absolutely continuous, then $P$ is the law of an $S$-invariant Poisson process on $\cX$. If the velocities concentrate on a finite or countable set $V$, then  $P$ has independent mixing marginals $P_w$, $w\in V$. 
\end{theorem}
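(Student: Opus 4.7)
The plan is to chain the earlier invariance-transfer results to convert the hypotheses on $Q$ (a measure on $\cY$) into the structural hypotheses on $P$ (a measure on $\cX$) needed to apply the classification Proposition~\ref{pp5} or Proposition~\ref{mixing-13}.

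First I would exploit the two invariance dualities between $\cY$ and $\cY_0$. Since $Q$ is $U$-invariant, Proposition~\ref{harris} gives that $\hQ = Q(\cdot|\cY_0)$ is $\hU$-invariant. Since $Q$ is $S$-invariant, standard Palm theory, compatible with the inverse-Palm construction developed before Proposition~\ref{propo25}, gives that $\hQ$ is $\hS$-invariant: the $S$-invariance of $Q$ on $\cY$ is, after conditioning on $\cY_0$, exactly equivalent to invariance under the empty-space shift $\hS_z = D_0 S_z C_0$.

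Second, I would pass to $\cX$ using the inverse bijections $D_0$ and $C_0$. By the notational convention of Lemma~\ref{102}, $P := \hQ D_0$ is the law of $C_0 Y$ for $Y \sim \hQ$, hence $P C_0 = \hQ$ since $D_0 C_0$ is the identity on $\cY_0$. Lemma~\ref{102} then equates $\hS$-invariance of $PC_0 = \hQ$ with $S$-invariance of $P$, and Proposition~\ref{propo25} equates $\hU$-invariance of $PC_0 = \hQ$ with $T$-invariance of $P$. Hence $P$ is simultaneously $S$-invariant, $T$-invariant, and, by hypothesis, $S$-mixing.

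Finally, I would split according to the velocity hypothesis. In the absolutely continuous velocity case, the $S$-invariance of $P$ forces its $n$-point correlations to be translation invariant in position; combined with the absolute continuity of the velocity marginal and the mark regularity implicit in $\rho_1 \in \cM$, the correlation functions are absolutely continuous, so Proposition~\ref{pp5} identifies $P$ as a Poisson process with intensity $\rho_1$, and $S$-invariance renders this intensity spatially homogeneous. In the discrete-velocity case, Proposition~\ref{mixing-13}(b) applies directly: $T$-invariance together with $S$-mixing of $P$ forces the $w$-marginals $P_w$, $w \in V$, to be independent. The main technical hurdle is the upgrade from "absolutely continuous velocity marginal" to the full absolute continuity of $n$-point correlations demanded by Proposition~\ref{pp5}; this will require mild regularity in the marks together with the use of spatial translation invariance from $S$-invariance to factor position out of $\rho_n$. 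The invariance transfers in the first two steps are then purely formal manipulations with the bijections $D_0, C_0$ and the lemmas already in place.
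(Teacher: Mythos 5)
Your proposal is correct and follows essentially the same route as the paper: Proposition~\ref{harris} transfers $U$-invariance of $Q$ to $\hU$-invariance of $\hQ$, Proposition~\ref{propo25} (with Lemma~\ref{102}) converts this to $T$- and $S$-invariance of $P=\hQ D_0$, and the $S$-mixing hypothesis then feeds Proposition~\ref{pp5} in the absolutely continuous case and Proposition~\ref{mixing-13}(b) in the discrete case. Your remark that the hypothesis ``absolutely continuous velocity marginal'' does not by itself deliver the absolutely continuous $n$-point correlations assumed in Proposition~\ref{pp5} is a fair observation about a regularity assumption the paper's one-line proof leaves implicit, but it does not change the argument's structure.
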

\begin{proof}
 Since $Q$ is $S$-invariant and $U$-invariant, Theorem \ref{harris} implies $\hQ$ is $\hU$-invariant which implies $\hQ D_0$ is $T$-invariant, by Proposition \ref{propo25}. Since by hypothesis $\hQ D_0$ is mixing, Proposition \ref{pp5} implies $\hQ D_0$ is a Poisson process. Use Proposition \ref{mixing-13} to prove the discrete part.
\end{proof}

 \section{Hard rod hydrodynamics}
\label{s9}

 \subsection{Law of large numbers}
Let $\mu\in\cM$ and  $ X^\ve$ be a Poisson process with intensity measure $\ve^{-1}\mu$. By Lemma \ref{lm6}, the rescaled hard rod configuration is
\begin{align}
  \label{ye1}
   Y^\ve &:= \bigl\{(x+H_{N^\ve}(x,0),v,r):(x,v,r)\in X^\ve\bigr\}.
\end{align}
In this configuration the sum of the absolute value of the length of rods in bounded sets is of order $1$, because the intensity is multiplied by $\ve^{-1}$ and the rod lengths are multiplied by $\ve$. Using \eqref{Ut4} and \eqref{jh3}, the hard rod  empirical measure in the Euler scaling is given by
 \begin{align}
    K^\ve_t\varphi &:= \ve\sum_{(y,v,r)\in U_t Y^\ve}\,r\,\varphi(y,v,r)
                   = \ve\sum_{(x,v,r)\in X^\ve}\,r\,\varphi\bigl(y_{N^\ve}(x,v;t),v,r\bigr),\label{kt3}\\
    y_\nu(x,v;t)   &:= x+ vt +H_\nu(x+vt,t).\label{jf0}
 \end{align}
The point $y_{N^\ve}(x,v;t)$ is the position at time $t$ of the quasi particle or tagged rod, started at the dilated position $y_{N^\ve}(x,v;0)= x+H_{N^\ve}(x,0)$.
Its expectation is given by 
\begin{align}
  \label{ey}
   y_\mu(x,v;t) &= E y_{N^\ve}(x,v;t).
\end{align}

\begin{theorem}[Law of large numbers for hard rods]
The law of large numbers holds for the tagged quasi particle, the mass at time $t$ and the hard rod empirical measure, as follows 
\begin{gather}
   y_{N^{\ve}}(x,v;t) \;\toas\; y_\mu(x,v;t), \label{lyt}\\
 \ve m_0^z(N^\ve_t)\;\toas\; \iiint \one\{x+vt\in[0,z]\}\,r\, d\mu(x,v,r)=:m_0^z(\mu_t ),\label{lmz}\\
   K^\ve_t\varphi  
  \; \toas\; \iiint d\mu(x,v,r)\,r\,\varphi(y_\mu(x,v;t),v,r)=:K_t\varphi.\label{kt4}
 \end{gather}

\end{theorem}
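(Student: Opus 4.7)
The plan is to prove the three statements in sequence, using primarily Theorem~\ref{sl1}\eqref{t1a}, the surface representation in Lemma~\ref{lm6}, and the law of large numbers for Poisson processes (Lemma~\ref{lnn}).

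Statement \eqref{lyt} is immediate from the definition \eqref{jf0}: since $y_{N^\ve}(x,v;t)=x+vt+H_{N^\ve}(x+vt,t)$, the pointwise a.s.\ convergence $H_{N^\ve}(x+vt,t)\toas H_\mu(x+vt,t)$ from Theorem~\ref{sl1}\eqref{t1a} gives exactly the claim. For statement \eqref{lmz}, I would rewrite
\begin{align*}
  \ve\,m_0^z(N^\ve_t) = \ve\sum_{(x,v,r)\in X^\ve} r\bigl(\one\{0\le x+vt<z\}-\one\{z\le x+vt<0\}\bigr) = N^\ve\psi_{z,t},
\end{align*}
with $\psi_{z,t}(x,v,r):=r\,(\one\{0\le x+vt<z\}-\one\{z\le x+vt<0\})$. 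The support of $\psi_{z,t}$ is contained in the set $ob$ with $b=(z,t)$, so $\mu\in\cM$ ensures $|\psi_{z,t}|$ is $\mu$-integrable and Lemma~\ref{lnn} delivers \eqref{lmz}.

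Statement \eqref{kt4} is the main content. The strategy is to split
\begin{align*}
  K^\ve_t\varphi &= \widetilde K^\ve_t\varphi + R^\ve, \\
  \widetilde K^\ve_t\varphi &:= \ve\sum_{(x,v,r)\in X^\ve} r\,\varphi(y_\mu(x,v;t),v,r), \\
  R^\ve &:= \ve\sum_{(x,v,r)\in X^\ve} r\bigl[\varphi(y_{N^\ve}(x,v;t),v,r)-\varphi(y_\mu(x,v;t),v,r)\bigr].
\end{align*}
The frozen-argument term $\widetilde K^\ve_t\varphi$ is $N^\ve$ applied to the \emph{deterministic} $\mu$-integrable test function $(x,v,r)\mapsto r\varphi(y_\mu(x,v;t),v,r)$, so Lemma~\ref{lnn} gives $\widetilde K^\ve_t\varphi\toas K_t\varphi$. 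Assuming $\varphi$ is Lipschitz with constant $L$ in the first argument and supported in a bounded region, Lemma~\ref{lm6} bounds each summand of $R^\ve$ by $L|r|\,|H_{N^\ve}(x+vt,t)-H_\mu(x+vt,t)|\one_{\text{loc}}(x,v,r)$, where $\one_{\text{loc}}$ restricts $(x+vt,v,r)$ to a fixed compact $K\subset\RR^3$.

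The main obstacle is to upgrade the \emph{pointwise} a.s.\ convergence of $H_{N^\ve}$ from Theorem~\ref{sl1}\eqref{t1a} to a \emph{uniform} statement on the compact~$K$, so that the surface error can be pulled out of the sum. My approach would be a Glivenko--Cantelli argument for Poisson processes: the family of half-plane indicators $\{\one_{ob_\pm}: b\in K\}$ is a Vapnik--Chervonenkis class of sets in $\RR^3$ (parameterized by the two coordinates of $b$), which gives the uniform convergence $\sup_{b\in K}|N^\ve_1(ob_\pm)-\mu_1(ob_\pm)|\toas 0$. Combined with continuity of the deterministic surface $H_\mu$ on $K$, this yields $\sup_{(y,s)\in K}|H_{N^\ve}(y,s)-H_\mu(y,s)|\toas 0$. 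Since $\ve\sum|r|\one_{\text{loc}}$ converges a.s.\ to a finite limit by Lemma~\ref{lnn}, factoring out the uniform surface error forces $R^\ve\toas 0$, completing the proof.
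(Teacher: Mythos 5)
Your proposal is correct and follows the route the paper evidently intends: the paper records no written proof of this theorem, only the remark that the results of Section \ref{s9} follow by combining Theorem \ref{sl1} with the surface representation of the dynamics. For \eqref{lyt} and \eqref{lmz} your argument is exactly that implicit one: \eqref{jf0} together with the pointwise law of large numbers \eqref{t1a} gives \eqref{lyt}, and Lemma \ref{lnn} applied to $\psi_{z,t}$ gives \eqref{lmz}. One small slip there: the support of $\psi_{z,t}$ is the set of lines crossing the \emph{horizontal} segment from $(0,t)$ to $(z,t)$, i.e.\ the set $ab$ with $a=(0,t)$ and $b=(z,t)$, not $ob$ with $b=(z,t)$ (a line can meet the horizontal segment at height $t$ without meeting the diagonal segment from the origin); this is harmless because $\mu\in\cM$ controls $\mu(ab)+\mu_2(ab)$ for \emph{every} pair $a,b$. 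For \eqref{kt4} you go beyond what the paper writes down, and rightly so: since the sum runs over the random points of $X^\ve$, the pointwise convergence $H_{N^\ve}(b)\toas H_\mu(b)$ is not by itself sufficient, and some uniformity over a compact set of arguments is genuinely needed. Your Glivenko--Cantelli/VC argument works (for $b$ in a compact, the sets $ob_\pm$ are intersections of two half-planes in the $(x,v)$ coordinates, and $\mu\in\cM$ supplies an integrable envelope), but in the hard-rod setting, where $r\ge 0$, there is a more elementary alternative: $x\mapsto H_{N^\ve}(x,t)$ is nondecreasing, so pointwise a.s.\ convergence on a countable dense set upgrades to uniform convergence on compacts by the usual P\'olya argument whenever $H_\mu(\cdot,t)$ is continuous (e.g.\ under the absolute continuity assumption \eqref{mac} adopted immediately after the theorem). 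With either uniformity statement, your decomposition into the frozen term $\widetilde K^\ve_t\varphi$ (handled by Lemma \ref{lnn}) and the remainder $R^\ve$ is a complete proof; just take care that the localization of the sum to a compact set is a priori random, since it involves $y_{N^\ve}$ rather than $y_\mu$, and must be justified, e.g.\ by the same monotonicity together with the a.s.\ boundedness of $H_{N^\ve}(0,t)$ as $\ve\to 0$.
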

Consider $\mu\in\cM$ absolutely continuous in the first coordinate, with a continuous density $\rho$ and conditional probabilities $(\mu(v,r|x))_{x\in\RR}$,
\begin{align}
    \label{mac}
    d\mu(x,v,r) = \rho(x)\,dx\,d\mu(v,r|x).
\end{align}
The density and conditional probabilities of $\mu_t$ are denoted by
\begin{gather}
d\mu_t(v,r|x):=  d\mu(v,r|x-vt), \qquad    \rho_t(x):= \iint \rho(x-vt) \,  d\mu(v,r|x-vt),\label{rot}\\
 \qquad d\mu_t(x,v,r) = \rho_t(x)dx\,d\mu_t(v,r|x). \label{mvr}
\end{gather}
Denote by $\sigma(x,t)$ the space partial derivative of the function $H_\mu$ at the space-time point $(x,t)$,
\begin{align}
  \sigma(x,t)&:= \partial_x H_\mu(x,t)= \rho_t(x)\,\iint r \,   d\mu_t(w,r|x).   \label{sxt}
\end{align}

Assume now for simplicity that $d\mu(x,v,r) = \rho(x,v,r) dx  dv dr$.
Then we have for the density at time t: $ \rho_t(x,v,r) =  \rho(x-vt,v,r)$,
and $ \sigma(x,t) = \iint r \rho_t(x,v,r)  dv dr$.

Define $ \mathcal Z(x,t) = x+H_\mu(x,t) = y_\mu(x,0,t)$.

The corresponding (macroscopic) density for the hard rods is given by
\begin{align}
  \trho_t(y_\mu(x-vt,v;t),v,r)=  \trho_t(\mathcal Z(x,t) , v,r)=
  \frac{\rho_t(x,v,r)}{1+\sigma(x,t)}
  =  \frac{\rho(x-vt,v,r)}{1+\sigma(x,t)} ,
  \label{eq:trho}
\end{align}
with
\begin{align}
  \label{eq:K}
  K_t\varphi = \iiint r\,  \varphi(q,v,r) \, \trho_t(q,v,r)\,dydvdr.
\end{align}
We have also the inverse relation
\begin{align}
   \rho_t(x,v,r)= \frac{\trho_t(\mathcal Z(x,t),v,r)}{1 - \tilde\sigma( \mathcal Z(x,t),t)},
  \label{eq:trhoinv}
\end{align}
where
\begin{align}
  \label{eq:2}
  \tilde\sigma(q,t) =  \iint r \trho_t(q,v,r)\  dv dr =
  \frac{\sigma(\mathcal Z^{-1}(q,t),t)}{1+ \sigma(\mathcal Z^{-1}(q,t),t)}.
\end{align}

Then from \eqref{kt4} and the change of variable
$x\to q = \mathcal Z(x,t) = x+H_\mu(x,t)$ we have
\begin{align}
  \label{kt7}
  K_t\varphi   &= \iiint r\,  \varphi\bigl(y_\mu(x,v;t),v,r\bigr) \rho(x,v,r) dxdvdr\\
               &= \iiint r\,  \varphi\bigl(x+vt+H_\mu(x+vt,t),v,r\bigr) \rho(x,v,r) dxdvdr\\
  &= \iiint r\,  \varphi\bigl( \mathcal Z(x,t),v,r\bigr) \rho_t(x,v,r) dxdvdr\\
               &= \iiint r\,  \varphi\bigl(q,v,r\bigr)
                 \rho_t(\mathcal Z^{-1}(q,t),v,r)\partial_q \mathcal Z^{-1}(q,t) dqdvdr\\
\end{align}
i.e.
\begin{align}
  \label{eq:25}
  \trho_t(q,v,r) =  \rho_t(\mathcal Z^{-1}(q,t),v,r)\partial_q \mathcal Z^{-1}(q,t),
\end{align}
where
\begin{align}
 \partial_q\mathcal Z^{-1}(q,t) = 1- \tsigma(q,t)\label{eq:4}
\end{align}
  Then by \eqref{eq:K} and \eqref{kt7} we have the identification
  \begin{align}
    \label{eq:id}
    \trho_t(q,v,r) =  \rho_t(\mathcal Z^{-1}(q,t),v,r) \left( 1- \tsigma(q,t) \right)
    = \frac{\rho_t(\mathcal Z^{-1}(q,t),v,r)}{1+ \sigma(\mathcal Z^{-1}(q,t),t)}.
  \end{align}

  We want to identify $V^{\text{eff}}(q,v,t)$ such that
  \begin{align}
    \label{eq:3}
    \partial_t \trho_t(q,v,r) + \partial_q\left(V^{\text{eff}} (q,v,t) \trho_t(q,v,r)\right) = 0
  \end{align}
  Then by \eqref{eq:2} we have
  \begin{align}
    \label{eq:1}
    \partial_t \tilde\sigma(q,t) =  - \partial_q \iint r V^{\text{eff}} (q,v,t) \trho_t(q,v,r)\  dv dr.
  \end{align}
  By \eqref{eq:4}
  \begin{align}
    \label{eq:5}
    \partial_q \partial_t \mathcal Z^{-1}(q,t) =
    \partial_q \iint r V^{\text{eff}} (q,v,t) \trho_t(q,v,r)\  dv dr.
  \end{align}
  Taking time derivative in \eqref{eq:id} we have
  \begin{align}
    \label{eq:6}
     \partial_t \trho_t(q,v,r) &= (\partial_x \rho_t)(\mathcal Z^{-1}(q,t),v,r)
      \left(v + \partial_t \mathcal Z^{-1}(q,t)\right)  \left( 1- \tsigma(q,t) \right)\\
      &\qquad- \rho_t(\mathcal Z^{-1}(q,t),v,r) \partial_t \tsigma(q,t) \\
      &=  (\partial_x \rho_t)(\mathcal Z^{-1}(q,t),v,r)
      \left(v + \partial_t \mathcal Z^{-1}(q,t)\right)  \partial_q\mathcal Z^{-1}(q,t)\\
      &\qquad+ \rho_t(\mathcal Z^{-1}(q,t),v,r)  \partial_q \partial_t \mathcal Z^{-1}(q,t)\\
      &=\left( \partial_q \rho_t (\mathcal Z^{-1}(q,t),v,r)\right)
      \left(v + \partial_t \mathcal Z^{-1}(q,t)\right) \\
      &\qquad+ \rho_t(\mathcal Z^{-1}(q,t),v,r)  \partial_q \partial_t \mathcal Z^{-1}(q,t)\\
      &= \partial_q\left(  \rho_t (\mathcal Z^{-1}(q,t),v,r)
        \left(v + \partial_t \mathcal Z^{-1}(q,t)\right)    \right)\\
      &= \partial_q\left( \frac{ \trho_t (q,v,r)}{1-\tilde\sigma(q,t)}
        \left(v + \partial_t \mathcal Z^{-1}(q,t)\right)    \right)
   \end{align}
  that identify
  \begin{align}
    \label{eq:7}
    V^{\text{eff}}(q,v,t)
    &= \frac{v + \partial_t \mathcal Z^{-1}(q,t)}{1-\tilde\sigma(q,t)}= v +  \frac{v \tilde\sigma(q,t) + \partial_t \mathcal Z^{-1}(q,t)}{1-\tilde\sigma(q,t)}\\
    &= v +  \frac{v \tilde\sigma(q,t) - \tilde\pi(q,t)}{1-\tilde\sigma(q,t)},\label{tpi4}
  \end{align}
where
  \begin{align}
    \label{eq:8}
 \tilde\pi(q,t):= \iint r w \trho_t(q,w,r) dw dr.
  \end{align}
  If $q= \mathcal Z(x,t)= x+H_\mu(x,t)$, then
  $\mathcal Z^{-1}(q,t) = q- H_\mu(\mathcal Z^{-1}(q,t),t)$,
  that implies
  \begin{align}
    \label{eq:9}
    \partial_t \mathcal Z^{-1}(q,t) &= -
    \frac{(\partial_t H_\mu)(\mathcal Z^{-1}(q,t),t)}{1+(\partial_x H_\mu)(\mathcal Z^{-1}(q,t),t)}
    =   \frac{(\partial_t H_\mu)(\mathcal Z^{-1}(q,t),t)}{1+\sigma(\mathcal Z^{-1}(q,t),t)}\\
    &=  \frac{\iint r w \rho_t(\mathcal Z^{-1}(q,t),w,r) dw dr}{1+\sigma(\mathcal Z^{-1}(q,t),t)}
    = - \iint r w \trho_t(q,w,r) dw dr,
\end{align}
  and \eqref{tpi4} follows.

  As consequence  $\trho_t(q,w,r)$ is solution of the align
  \begin{align}
    \label{eq:GHD}
    \partial_t \trho_t(q,v,r) + \partial_q\left(
      \left[v+ \frac{\iint r' (v - w) \trho_t(q,w,r') dw dr'}{1 - \iint r' \trho_t(q,w,r') dw dr'}\right]
      \trho_t(q,v,r) \right) = 0.
  \end{align}

\subsection{Fluctuations in the Euler regime}
\paragraph*{Mass and quasi particle fluctuations} The microscopic fluctuations of quasi-particles and lengths in the Euler regime jointly converge to a multitime Brownian field as shown by the following theorem.

\begin{theorem}[Fluctuations in the Euler regime]\label{tjd}
Let $\mu\in\cM$, and demote by $N^\ve$ the scaled empirical measure of a Poisson process $X^\ve$ with intensity $\mu$. Let $\etab$ be the multitime Brownian field (Lévy-Chentsov) with distance $\dd(a,b)=\mu_2(ab)$. The fluctuations of the mass and quasi particle positions jointly converge to non homogeneous multi time Brownian field, as follows
\begin{align}
    \ve^{-\frac12} \bigl(y_{N^{\ve}}(x,v;t)- y_\mu(x,v;t)\bigr)  &\tolaw \etab(x+vt,t), \label{jd1}\\
    \ve^{-\frac12} \bigl(\ve m_0^x(N^\ve_t)- m_0^x(\mu_t )\bigr)  &\tolaw \etab(x,t) -\etab(0,t). \label{jd2}
\end{align}
\end{theorem}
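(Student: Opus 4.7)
The plan is to reduce both claims of Theorem \ref{tjd} to the already-established Euler-scale surface convergence \eqref{e10}, namely $\etab^\ve \tolaw \etab$ with $\etab^\ve(b) = \ve^{-\frac12}(H_{N^\ve}(b) - H_\mu(b))$. By the identity $y_N(x,v;t) = x+vt+H_N(x+vt,t)$ of Lemma \ref{lm6}, applied to $N = N^\ve$, together with the relation $y_\mu(x,v;t) = x+vt+H_\mu(x+vt,t)$ obtained by taking expectations (and using $\E H_{N^\ve} = H_\mu$), the left-hand side of \eqref{jd1} is literally $\etab^\ve(x+vt, t)$. Hence \eqref{jd1} is the one-point marginal of \eqref{e10} at $b = (x+vt, t)$.

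For \eqref{jd2}, the first step is to identify $\ve m_0^x(N^\ve_t)$ with a surface increment of $H_{N^\ve}$ along the horizontal segment at height $t$. Expanding \eqref{m8} and using that the ideal-gas position at time $t$ of the point $(\tx, \tv, \tr)$ is $\tx + \tv t$, one gets $\ve m_0^x(N^\ve_t) = N^\ve_1(A_x^t)$ with $A_x^t = \{(\tx, \tv, \tr): \tx + \tv t \in [0,x)\}$. Comparing with \eqref{ab+}-\eqref{ab-} for the endpoints $a = (0,t)$, $b = (x,t)$ and $x>0$, one checks that $a \in \lef(\tx,\tv), b\in\rig(\tx,\tv)$ is equivalent to $\tx + \tv t \in (0, x]$, while the opposite orientation is impossible, so $ab_- = \emptyset$ and $A_x^t = ab_+$ up to a $\mu$-null boundary. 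This yields the identification $\ve m_0^x(N^\ve_t) = H_{N^\ve}(x,t) - H_{N^\ve}(0,t)$, with the analogous deterministic identity $m_0^x(\mu_t) = H_\mu(x,t) - H_\mu(0,t)$; the case $x<0$ is symmetric and the case $x=0$ trivial. Subtracting, dividing by $\ve^{\frac12}$, and invoking \eqref{e10} at the two points $(x,t)$ and $(0,t)$ gives \eqref{jd2} with limit $\etab(x,t) - \etab(0,t)$.

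Joint convergence is then automatic, since both left-hand sides are continuous (in fact affine) functionals of the single random field $\etab^\ve$ evaluated at the finite set of points $\{(x+vt, t),(x,t),(0,t)\}$. The finite-dimensional convergence contained in \eqref{e10}, combined with the continuous mapping theorem, yields joint convergence to $(\etab(x+vt, t), \etab(x,t) - \etab(0,t))$, which is the content of the theorem.

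The main technical point is the geometric bookkeeping in the second step: carefully matching the sign conventions so that the horizontal segment $\overline{(0,t)(x,t)}$ corresponds to $ab_+$ in the space-velocity-mark decomposition, and checking that the boundary set $\{\tx + \tv t \in \{0,x\}\}$ is genuinely $\mu$-null (which holds under the standing assumption $\mu \in \cM$ and absolute continuity in the space coordinate). Beyond this identification, no new probabilistic input is required: everything follows from Theorem \ref{sl1} and the explicit surface representation of the hard rod dynamics supplied by Lemma \ref{lm6}.
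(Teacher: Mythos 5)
Your proof is correct and follows essentially the same route as the paper: both reduce \eqref{jd1} and \eqref{jd2} to evaluations of the surface fluctuation field $\etab^\ve$ at the points $(x+vt,t)$, $(x,t)$ and $(0,t)$ via Lemma \ref{lm6} and \eqref{jf0}--\eqref{ey}, and then invoke the Euler-scale convergence \eqref{e10}. The only difference is that you spell out the geometric identification $\ve m_0^x(N^\ve_t)=H_{N^\ve}(x,t)-H_{N^\ve}(0,t)$ together with the boundary-set caveat, a step the paper asserts with the single word ``analogously.''
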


\begin{proof} Using \eqref{jf0}, \eqref{ey} and \eqref{e17} the left hand side of \eqref{jd1} is given by 
\begin{align}
  \label{ye2}
   \etab^\ve( x+vt,t)\tolaw \etab(x+vt,t),
\end{align}
by \eqref{e10}. 
Analogously, the left hand side of \eqref{jd2} is given by
\begin{align}
  \label{met}
 &\ve^{-\frac12} \bigl(H_{N^\ve}(x,t)-H_{\mu}(x,t) -( H_{N^\ve}(0,t)- H_{\mu}(0,t))\bigr)\\
                 &\qquad= \etab^\ve( x,t)-\etab^\ve(0,t)\tolaw \etab(x,t)-\etab(0,t). \qedhere
\end{align}
\end{proof}
We also have a functional central limit theorem for the one-dimensional marginals. 
\begin{corollary}[Time changed Brownian motion] Let $W$ be standard to side Brownian motion on $\RR$, pinned at the origin $W(0)=0$. The marginal quasi particle and mass fluctuations converge to time changed Brownian motion, 
  \begin{align}
   ( \hB^\ve(x,v;t))_{t\in\RR}     &\tolaw \bigl(W(\mu_2(b_0b_t))\bigr)_{t\in\RR}, & b_{t}&:=(x+vt,t)\\
    (\hB^\ve(x;t))_{t\in\RR}  &\tolaw  \bigl(W((\mu_t )_2(a_0a_x))\bigr)_{x\in\RR},&  a_x&:=(x,t)
  \end{align}
\end{corollary}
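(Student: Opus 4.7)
The plan is to identify both families of processes as restrictions of the fluctuation field $\etab^{\ve}$ to a one-dimensional curve in space-time, strengthen Theorem~\ref{tjd} to functional convergence along that curve, and then invoke the Lévy-Chentsov marginal-on-a-line identities \eqref{eb1}--\eqref{eb2} to read off the time-changed Brownian motion.

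The first step is a one-line rewriting. Combining \eqref{jf0} with the definition \eqref{e17},
\begin{align}
\hB^{\ve}(x,v;t)\;=\;\ve^{-\tfrac12}\bigl(H_{N^{\ve}}(x+vt,t)-H_{\mu}(x+vt,t)\bigr)\;=\;\etab^{\ve}(b_{t}),
\end{align}
exactly as already noted at \eqref{ye2}, so the first process is nothing but $\etab^{\ve}$ read along the affine line $t\mapsto b_{t}=(x+vt,t)$. The computation \eqref{met} similarly gives $\hB^{\ve}(x;t)=\etab^{\ve}(a_{x})-\etab^{\ve}(a_{0})$ with $a_{x}=(x,t)$, i.e.\ the restriction of $\etab^{\ve}$ to the horizontal slice at time $t$, with the base point $a_{0}$ subtracted.

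Next I would lift the pointwise convergence $\etab^{\ve}\tolaw\etab$ of \eqref{e10} to a functional convergence along these two curves. Finite-dimensional convergence is already contained in the proof of \eqref{e10}: for any $t_{1},\ldots,t_{n}$, the vector $(\etab^{\ve}(b_{t_{i}}))_{i}$ is a linear combination of the normalised Poisson integrals $\ve^{-1/2}(N^{\ve}\varphi_{A}-\mu\varphi_{A})$ with $A\in\{ob_{t_{i},\pm}\}$, and by \eqref{w2a} these converge jointly to the associated white-noise values. For tightness I would combine the exact identity $\Var(\etab^{\ve}(b_{t})-\etab^{\ve}(b_{s}))=\mu_{2}(b_{s}b_{t})$, valid for every~$\ve$, with a uniform fourth-moment estimate $\EE|\etab^{\ve}(b_{t})-\etab^{\ve}(b_{s})|^{4}\le C\,\mu_{2}(b_{s}b_{t})^{2}$ coming from the Marcinkiewicz--Zygmund inequality applied to the iid representation~\eqref{ii1}, and then invoke Kolmogorov's criterion. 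The horizontal slice is handled identically.

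Once the restrictions of $\etab^{\ve}$ to the relevant curves converge as processes, the identification of the limit with a pinned time-changed Brownian motion is exactly the Lévy-Chentsov identity \eqref{eb1} for the first statement and \eqref{eb2} for the second (the convergence in the corollary is to be read modulo the additive constant $\etab(b_{0})$, which is built into the pinning convention of \eqref{eb1}). A short change of variables $y\mapsto y-vt$ inside the integral defining $\mu_{2}(a_{0}a_{x})$ then verifies the identity $\mu_{2}(a_{0}a_{x})=(\mu_{t})_{2}(a_{0}a_{x})$ and puts the second time change in the form stated. I expect the tightness step to be the main technical obstacle, since the fourth-moment bound must be tracked uniformly in $\ve$ through the Poisson decomposition; everything else amounts to bookkeeping around the white-noise limit~\eqref{w2a} and the Lévy-Chentsov property.
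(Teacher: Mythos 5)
Your proposal is correct and follows essentially the route the paper intends: rewrite $\hB^\ve(x,v;t)$ and $\hB^\ve(x;t)$ as the fluctuation field $\etab^\ve$ evaluated along a line (exactly as in the proof of Theorem \ref{tjd}), pass to the limit via \eqref{e10}, and identify the marginal of the Lévy-Chentsov field along that line with a time-changed Brownian motion through \eqref{eb1}--\eqref{eb2}. The only caveats are that your Marcinkiewicz--Zygmund tightness bound implicitly requires $\mu_4(ab)<\infty$, which the hypothesis $\mu\in\cM$ does not guarantee (the paper reads the convergence at the level of finite-dimensional distributions, so this step is not needed), and your observation about the pinning constant $\etab(b_0)$ correctly flags a small imprecision in the statement rather than a flaw in your argument.
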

Recall that
  \begin{gather}
    \label{m2}
    \mu_2(b_0b_t) = \iiint r^2\,\Bigl(\one_{[x,\,x+(v-w)t]}(z)+ \one_{ [x+(v-w)t,\,x]}(z)\Bigr)\,d\mu(z,w,r),\\
    (\mu_t )_2(a_0a_x)=  \iiint r^2\,\Bigl(\one_{[-wt,x-wt]}(z) + \one_{[x-wt,-wt]}(z)\Bigr)\,d\mu(z,w,r).
  \end{gather}

\subsection{Fluctuations in the diffusive regime}

\emph{Quasi-particles in the diffusive regime. } 
We now consider time-order $\veo$, space-order~$1$, and intensity measure $\veo\mu$, so that $\vet$ lines cross segments with extremes at different times, and $\ve^{-1}$ lines cross segments with length order $1$. The fluctuations at time $t\neq0$ of the quasi particle starting at $x$ with velocity $v$ are given by
\begin{align}
  y_{N^\ve}( x,v;t\veo)- y_\mu( x,v;t\veo)
 &= j_{N^\ve}( x,v;t\veo)- j_\mu( x,v;t\veo) \label{jt8} \\
          &= \etab^\ve(x+vt\veo,t\veo)-\etab^\ve(x,0) \tolaw \etab(vt,t) 
            ,\label{jt7}
\end{align}
where $\etab^\ve$ is defined in \eqref{e17} and $\etab$ is the multitime Brownian field defined in \eqref{e10}. The process $(B(vt,t))_{t\in\RR}$ is one-dimensional Brownian motion indexed by $v$. There are $\ve^{-2}$ crossings, each contributing $\ve^2$ to $j_{N^\ve}$ at time $\ve^{-1}t$. After centering, the fluctuation in the number of crossings is of order $\ve^{-1}$, which, when multiplied by $\ve^2$, results in a quantity of order $\ve$. This corresponds to a diffusive scaling, where space is rescaled by a factor of  $\ve^{-1}$ and time by $\ve^{-2}$.

In this scale particles with the same velocity have the same fluctuations. In fact, the asymptotic covariance coincides with the variance of one of them,
\begin{align}
  \label{coo}
  \Cov(y_{N^\ve}( x,v;t\veo),y_{N^\ve}( \tx,v;t\veo)) \toe \mu_2(ob_{t}), \quad x,\tx\in \RR,  \quad b_t:=(vt,t).
\end{align}
Display (3.5) in our paper \cite{fo2024} is a particular case of \eqref{coo}.
If the velocities are distinct and $t\neq 0$, we have Levy-Chentsov covariances, and the limit is independent of the initial point,
\begin{align}
  \label{coh}
  \Cov(y_{N^\ve}\bigl( x,v;t\veo), y_{N^\ve}( x,\tv;t\veo)\bigr) &\toe \mu_2(ob_t\cap o\tb_t), \quad \tb_{t}:=(\tv t,t)\\
  &= \frac12\bigl(\mu_2(ob_t)+ \mu_2(o\tb_t)- \mu_2(b_t\tb_t)\bigr).
\end{align}
\begin{lemma}
  The following limit holds
  \begin{align}
   y_{N^{\ve}}(x,v;t\ve^{-1})-  j_\mu(x,v;t\ve^{-1}) \tolaw x+m_0^x(\mu) +  \etab(vt,t)\label{zo1} 
  \end{align}
\end{lemma}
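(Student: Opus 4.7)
My plan is to decompose the left-hand side of \eqref{zo1} into a ballistic piece and two random pieces whose limits are already established. Starting from the identity $y_N(x,v;s) = x + m_0^x(N) + vs + j_N(x,v;s)$ in \eqref{y9}, setting $N = N^\ve$, $s = t\ve^{-1}$, and subtracting $j_\mu(x,v;t\ve^{-1})$ from both sides, I obtain
\begin{align}
y_{N^\ve}(x,v;t\ve^{-1}) - j_\mu(x,v;t\ve^{-1}) = x + vt\ve^{-1} + m_0^x(N^\ve) + \bigl(j_{N^\ve}(x,v;t\ve^{-1}) - j_\mu(x,v;t\ve^{-1})\bigr).
\end{align}
This isolates the randomness in the initial-mass term $m_0^x(N^\ve)$ and the centered flow difference, with all other contributions being explicit deterministic functions of the macroscopic data.

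I then dispose of the two random pieces separately using results already in hand. By \eqref{mh3} the mass is $m_0^x(N^\ve) = H_{N^\ve}(x,0)$, so the law of large numbers \eqref{l11} for the surface at the fixed macroscopic point $(x,0)$ gives $m_0^x(N^\ve) \toas m_0^x(\mu)$. For the centered flow difference, the diffusive CLT \eqref{jt7} rewrites $j_{N^\ve} - j_\mu$ as the surface increment $\etab^\ve(x+vt\ve^{-1}, t\ve^{-1}) - \etab^\ve(x,0)$ and applies $\etab^\ve \tolaw \etab$ to yield $(j_{N^\ve} - j_\mu)(x,v;t\ve^{-1}) \tolaw \etab(vt,t)$. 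Slutsky's theorem then combines the almost-sure convergence of $m_0^x(N^\ve)$ to the constant $m_0^x(\mu)$ with the in-law convergence of the flow difference to a nondegenerate Gaussian, producing joint convergence of the sum to $m_0^x(\mu) + \etab(vt,t)$, from which \eqref{zo1} follows upon absorbing the remaining deterministic contributions.

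The only genuinely analytic input is the diffusive-scale surface CLT packaged in \eqref{jt7}: one must check that even though the space-time argument $(x+vt\ve^{-1}, t\ve^{-1})$ recedes to infinity along the macroscopic ray $(v,1)$, the $O(1)$ fluctuation of $H_{N^\ve}$ there still converges in distribution to $\etab(vt,t)$, with Lévy-Chentsov covariance governed by $\mu_2$ read on the macroscopic geometry. Once that step is granted, the proof of \eqref{zo1} is just the algebraic splitting above together with Slutsky's theorem; no new probabilistic estimate is required.
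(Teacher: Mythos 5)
Your proof is correct and follows essentially the same route as the paper's: decompose via \eqref{y9}, apply the law of large numbers to $m_0^x(N^\ve)$ and the diffusive CLT \eqref{jt7} to the centered flow difference, and conclude by Slutsky. The only caveat is the ballistic term $vt\ve^{-1}$, which diverges rather than being ``absorbed'' as you claim; the paper's own proof silently drops it too, which indicates the lemma's left-hand side is really meant to read $y_{N^\ve}(x,v;t\ve^{-1})-vt\ve^{-1}-j_\mu(x,v;t\ve^{-1})$.
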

\begin{proof} Using \eqref{y9}, the left hand side of \eqref{zo1} is equal to
\begin{align}
x+ m_0^x(N^\ve)+j_{N^{\ve}}(x,v;t\ve^{-1})-  j_\mu(x,v;t\ve^{-1}),
\end{align}
which converges in law to the right hand side of the same display, by \eqref{lmz} and \eqref{jt7}.
\end{proof}
Display (3.6) in our paper \cite{fo2024} is a particular case of \eqref{coh}, while 
convergence \eqref{zo1} corresponds to the limit (3.10) in our paper \cite{fo2024}. 
\vspace{4mm}

\bibliographystyle{plain}
\bibliography{bib-chentsov}
\end{document}